\newtheorem{theorem}{Theorem}[section]
\newtheorem{proposition}[theorem]{Proposition}
\newtheorem{lemma}[theorem]{Lemma}
\newtheorem{remark}[theorem]{Remark}
\numberwithin{equation}{section}
\begin{document}

\begin{frontmatter}

\title{Hausdorff dimension of exceptional sets arising in $\theta$-expansions}

\author[upb,ismma]{Gabriela Ileana Sebe}
\ead{igsebe@yahoo.com}
\address[upb]{Faculty of Applied Sciences, Politehnica University of Bucharest, \\ Splaiul Independentei 313, 060042, Bucharest, Romania}
\address[ismma]{Gheorghe Mihoc-Caius Iacob Institute of Mathematical Statistics and Applied Mathematics of the Romanian Academy, Calea 13 Sept. 13, 050711 Bucharest, Romania}
\author[anmb]{Dan Lascu\corref{cor1}}
\ead{lascudan@gmail.com}
\address[anmb]{Mircea cel Batran Naval Academy, 1 Fulgerului, 900218 Constanta, Romania}
\cortext[cor1]{Corresponding author}

\begin{abstract}
For a fixed $\theta^2=1/m$, $m \in \mathbb{N}_+$, let $x \in [0, \theta)$ and 
$[a_1(x) \theta, a_2(x) \theta, \ldots]$ be the $\theta$-expansion of $x$. 
Our first goal is to extend for $\theta$-expansions the results of Jarnik \cite{J-1928} concerning the set of badly aproximable numbers and the set of irrationals whose partial quotients do not exceed a positive integer. 
Define 
$
L_n (x)= \displaystyle \max_{1 \leq i \leq n} a_i(x), x \in \Omega:=[0, \theta)\setminus \mathbb{Q} 
$.
The second goal is to complete our result inspired by Philipp \cite{Ph-1976} 
\[
\liminf_{n \to \infty} \frac{L_n(x) \log\log n}{n} = \frac{1}{\log \left( 1+ \theta^2\right)} \mbox{ for a.e. } x \in [0, \theta].
\]
In this regard we prove that for any $\eta > 0$ the set 
\[
E(\eta) = 
\left\lbrace 
x \in \Omega: \lim_{n \to \infty} \frac{L_n(x) \log\log n}{n} = \eta
\right\rbrace 
\]
is of full Hausdorff dimension. 

\end{abstract}
\begin{keyword}
$\theta$-expansions, partial quotients, Hausdorff dimension.
\end{keyword}

\end{frontmatter}

\sloppy

\section{Introduction}

The present paper continues our series of papers dedicated to $\theta$-expansions \cite{SL-2014,Sebe-2017,SL-2019}. 
Our aim here is to complete some results on extreme value theory obtained in \cite{SL-2023-1}. 
In order to do this, we introduce a powerful tool for discriminating between the sets of Lebesgue measure zero, namely the notion of Hausdorff dimension, developed by Hausdorff \cite{H-1919} in 1919. 

Fractional dimensional theory provides an indication of the size and complexity of a set and has applications in studying the exceptional sets arising in the metrical theory of continued fractions. 

Shortly thereafter Jarnik \cite{J-1928} applied it to number theoretical problems and published the first paper in which the investigation is inspired by a problem of Diophantine approximation. 
In fact, Jarnik determined the Hausdorff dimension of sets of real numbers very close to infinitely many rational numbers. 
He investigated the set of irrationals whose partial quotients are bounded, i.e., the set of badly approximable numbers from the point of view of Diophantine approximation, and the set of irrationals whose partial quotients do not exceed a positive integer. 

These results have been subsequently generalized in many directions. 

Our first goal is to extend for $\theta$-expansions the work of Jarnik which remains a reference in the theory of regular continued fractions (RCFs). 
Since the case $\theta=1$ refers to RCF-expansions, we generalize and even improve some results of Jarnik. 

For a fixed $\theta \in (0, 1)$, every $x \in \left(0, \theta \right)$ can be expanded into a finite or infinite $\theta$-\textit{expansion}
\begin{equation}
x = \frac{1}{\displaystyle a_1\theta
+\frac{1}{\displaystyle a_2\theta
+ \frac{1}{\displaystyle a_3\theta + \ddots} }} =: [a_1 \theta, a_2 \theta, a_3 \theta, \ldots]. \label{1.1}
\end{equation}
The positive integers $a_n$, $n \in \mathbb{N}_+:=\{1, 2, \ldots\}$, which are called \textit{partial quotients} or \textit{digits} are determined as follows.
Consider a generalization of the Gauss map $T_{\theta}: [0,\theta] \to [0,\theta]$,
\begin{equation}
T_{\theta}(x):=
\left\{
\begin{array}{ll}
{\displaystyle \frac{1}{x} - \theta \left \lfloor \frac{1}{x \theta} \right\rfloor} &
{\displaystyle \hbox{if } x \in (0, \theta],}\\
\\
0 & \hbox{if } x=0.
\end{array}
\right. \label{1.2}
\end{equation}
and 
$a_{n+1}(x) =  a_n\left(T_{\theta}(x)\right) = a_1\left(T^{n}_{\theta}(x)\right)$, $n \in \mathbb{N}_+$, where 
\begin{equation}
a_1(x) := \left\{\begin{array}{ll}
\left\lfloor \displaystyle \frac{1}{x \theta}\right\rfloor  & \hbox{if }  x \neq 0, \\
\\
\infty & \hbox{if }  x = 0
\end{array} \right. \label{1.3}
\end{equation}
Here $\left\lfloor \cdot \right\rfloor$ stands for integer part. 

This expansion introduced by Chakraborty and Rao \cite{CR-2003} has many of the usual properties of RCFs. Moreover, Chakraborty and Rao proved that the dynamical system given by the transformation $T_{\theta}$ admits an absolutely continuous invariant probability for certain values of $\theta$. 
They have identified for $\theta^2 = \displaystyle \frac{1}{m}$, $m \in \mathbb{N}_+$, the invariant measure for the transformation $T_{\theta}$ as 
\begin{equation} \label{1.4}
\mathrm{d}\gamma_{\theta} := \frac{1}{\log \left(1+\theta^{2}\right)}
\frac{\theta \,\mathrm{d}x}{1 + \theta x}.
\end{equation}

It was proved in \cite{CR-2003} that the dynamical system 
$([0,\theta], T_{\theta})$ is ergodic and the measure $\gamma_{\theta}$ is invariant under $T_{\theta}$, that is, $\gamma_{\theta} (A) = \gamma_{\theta} (T^{-1}_{\theta}(A))$
for any $A \in {\mathcal{B}}_{[0, \theta]}=$ the $\sigma$-algebra of all Borel subsets of $[0, \theta]$.

Moreover, if $\theta^2 = \frac{1}{m}$, $m \in \mathbb{N}_+$, $[a_1 \theta, a_2 \theta, a_3 \theta, \ldots]$ is the $\theta$-expansion of any $x \in (0, \theta)$ if and only if the following conditions hold: 
\begin{enumerate}
\item [(i)] 
$a_n \geq m$ for any $n \in \mathbb{N}_+$ 
\item [(ii)] 
in the case when $x$ has a finite expansion, i.e., $x = [a_1 \theta, a_2 \theta, \ldots,  a_n \theta]$, then $a_n \geq m+1$.
\end{enumerate}

Every irrational $x \in (0, \theta) \setminus \mathbb{Q}=: \Omega$ has an infinite $\theta$-expansion. Note that for all $n \in \mathbb{N}_+$, $a_n(x) \geq m$ and $T^n_{\theta}([a_1 \theta, a_2 \theta, \ldots]) = [a_{n+1} \theta, a_{n+2} \theta, \ldots]$. 

For all $n \in \mathbb{N}_+$, the finite truncation of (\ref{1.1}) 
\[
\frac{p_n(x)}{q_n(x)} = [a_1(x) \theta, a_2(x) \theta, \ldots, a_n(x) \theta]
\]
is called the $n$-\textit{th convergent} of the $\theta$-expansion of $x$. 
For every infinite $\theta$-expansion $[a_1 \theta, a_2 \theta, \ldots]$ the sequences $\{p_n\}_{n \geq -1}$ and $\{q_n\}_{n \geq -1}$ can be obtained by the following recursive relations 
\begin{eqnarray}
p_n(x) &=& a_n(x) \theta p_{n-1}(x) + p_{n-2}(x), \quad \label{1.5} \\
q_n(x) &=& a_n(x) \theta q_{n-1}(x) + q_{n-2}(x), \quad \label{1.6}
\end{eqnarray}
with $p_{-1}(x) := 1$, $p_0(x) := 0$, $q_{-1}(x) := 0$ and $q_{0}(x) := 1$. 
By induction, we obtain
\begin{equation}
p_{n-1}(x)q_{n}(x) - p_{n}(x)q_{n-1}(x) = (-1)^{n}, \quad n \in \mathbb{N}. \label{1.7}
\end{equation}
From (\ref{1.6}), we have that $q_n(x) \geq \theta$, $n \in \mathbb{N}_+$. Further, also from (\ref{1.6}) and by induction we get 
\begin{equation}
q_n(x) \geq  \left\lfloor \frac{n}{2} \right \rfloor \theta^2. \label{1.8} 
\end{equation}

Define 
$
L_n (x):= \displaystyle \max_{1 \leq i \leq n} a_i(x), x \in \Omega. 
$
Inspired by a Philipp's result \cite{Ph-1976} which answered a conjecture of Erd\"os, we have proved in \cite{SL-2023-1} that for a.e.  
$x \in [0, \theta]$ 
\[
\liminf_{n \to \infty} \frac{L_n(x) \log\log n}{n} = \frac{1}{\log \left( 1+ \theta^2\right)}.
\]

In 2002, Okano \cite{Okano} constructed some specific numbers $x \in [0, 1)$ and showed that for any $k \geq 2$
\[
\liminf_{n \to \infty} \frac{L_n(x) \log\log n}{n} = \frac{1}{\log k}.
\]
The results established by Philipp and Okano for RCF expansions were complemented by Wu and Xu \cite{Wu2009} by showing that its exceptional set is of full Hausdorff dimension. 

The second goal of this paper is to prove that for any $\eta \geq 0$, the set 
\begin{equation} \label{1.9}
E(\eta) = 
\left\lbrace 
x \in \Omega: \lim_{n \to \infty} \frac{L_n(x) \log\log n}{n} = \eta
\right\rbrace 
\end{equation}
is of full Hausdorff dimension. 

The paper is organized as follows. 
In Section 2, we make a brief survey of the Hausdorff dimension. 
In Section 3 we establish some basic metric properties of $\theta$-expansions.
In Section 4 we generalize the results obtained by Jarnik \cite{J-1928}. 
It is worth mentioning that Proposition \ref{Prop.Jarnik} is a key tool for our further results. 
We also add some concluding remarks. 
The final section is devoted to the proof of Theorem \ref{th.5.5}. In Section 5, using the lower and upper bounds of the Hausdorff dimension obtained in Section 4 for some sets, we achieve the second goal of the paper mentioned above.  

\section{Hausdorff dimension}

In this section we recall some definitions and we establish notations for later use.

Hausdorff measure is an extension of Lebesgue measure that allows for the measurement of subsets within $\mathbb{R}^n$ possessing dimensions smaller than $n$.
This includes subsets like submanifolds and the intriguing category of fractal sets.
Through the application of Hausdorff measure, it becomes possible to define the dimension of any set within $\mathbb{R}^n$ even in cases involving complex or intricate geometries.

Hausdorff's idea consists in measuring a set by covering it by an infinite countable family of sets of bounded diameter, and then in looking at what happens when the maximal diameter of these covering sets tends to $0$. 

For a non-empty set $E \subset \mathbb{R}$, its \textit{diameter}, denoted by $|E|$, is by definition
\[
|E| := \sup\left\lbrace |x-y|: \, x,y \in E \right\rbrace.
\]
Let $J$ be a finite or infinite set of indices. If for some positive real number $\delta$ the set $E$ and the collection $\{C_j\}_{j \in J}$ of subsets of $\mathbb{R}$ satisfy $E \subset \displaystyle \bigcup_{j \in J} C_j$ and $0 < |C_j| \leq \delta$ for any $j \in J$, then $\{C_j\}_{j \in J}$ is called a $\delta$\textit{-covering of }$E$.

Note that the sets in a countable cover can be any sets whatever. For example, they do not need to be open or closed. 

If $\mathcal{C} = \{C_j\}_{j \in J}$ is an infinite countable collection of sets in $\mathbb{R}$ and $s>0$ is a real number we say the $s$-\textit{total length of }$\mathcal{C}$ is 
\[
\Delta_s(\mathcal{C}) = \displaystyle \sum_{j \in J} |C_j|^s.
\]
For any positive real number $\delta$, we define the $s$-\textit{covered length} of $E$ as 
\[
H_{\delta, s} (E) := \displaystyle \inf_{J} \sum_{j \in J} |C_j|^s,
\]
where the infimum is taken over all the countable $\delta$-coverings $\{C_j\}_{j \in J}$ of $E$. 
Clearly, the function $\delta \mapsto H_{\delta, s} (E)$ is non-increasing. 
Consequently, 
\[
H_s (E) = \displaystyle \lim_{\delta \to 0} H_{\delta, s} (E) = \displaystyle \sup_{\delta > 0} H_{\delta, s} (E)
\]
is well-defined and lies in $[0, \infty]$. 

The \textit{Hausdorff dimension} of a set $E \subset \mathbb{R}$ denoted by $\dim_{H} (E)$, is the unique non-negative real number $s_0$ such that 
$H_s (E) = 0$ if $s>s_0$ and $H_s (E) = +\infty$ if $0<s<s_0$.
In other words, we have 
\begin{eqnarray*}
\dim_{H}(E) &=& \inf\{s: \, H_s(E)=0\} \\
&=& \sup\{s: \, H_s(E)=+\infty\}. 
\end{eqnarray*}

Recall some properties of Hausdorff dimension for subsets $E, E_1, E_2, \ldots$ of $\mathbb{R}$: 
\begin{itemize}
\item[(i)]
If $E_1 \subset E_2$, then $\dim_{H}(E_1) \leq \dim_{H}(E_2)$;

\item[(ii)] 
$\displaystyle\dim_H \left( \bigcup^{\infty}_{j=1} E_j\right)  = \sup\{ \dim_H (E_j): j \geq 1\}$;

\item[(iii)] The Hausdorff dimension of a finite or countable set of points is $0$;

\item[(iv)] Two sets differing by a countable set of points have the same Hausdorff dimension. 
\end{itemize}
We also present the following result (see \cite[Proposition 2.3]{Falconer}). 
\begin{lemma} \label{lema.2.1}
Given a function $h: E \subset [0, 1) \to [0, 1)$ and suppose that it satisfies the following $\mu$-H\"older condition ($\mu > 0$), for some constant $C > 0$
\[
\left| h(x) - h(y) \right|  \leq C 
\left| x - y \right|^{\mu}, \mbox{ for all } x ,y \in E. 
\]
Then, 
\[
\dim_H(h(E)) \leq \frac{1}{\mu} \dim_H(E). 
\]
\end{lemma}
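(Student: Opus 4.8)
This is the familiar fact that a $\mu$-H\"older map can inflate Hausdorff dimension by at most the factor $1/\mu$, and I would prove it by the standard direct covering argument. The plan is to fix an arbitrary $s>\dim_H(E)$, show that then $\dim_H\bigl(h(E)\bigr)\le s/\mu$, and finally let $s$ decrease to $\dim_H(E)$.

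First I would invoke the definition of Hausdorff dimension recalled in this section: since $s>\dim_H(E)$ we have $H_s(E)=0$, so for every $\varepsilon>0$ and every $\delta>0$ there is a countable $\delta$-covering $\{C_j\}_{j\in J}$ of $E$ with $\sum_{j\in J}|C_j|^{s}<\varepsilon$. Discarding any $C_j$ with $C_j\cap E=\emptyset$, I may assume $h$ is defined on each $C_j\cap E$, so that $h(E)\subseteq\bigcup_{j\in J}h(C_j\cap E)$. The H\"older hypothesis then gives, for $x,y\in C_j\cap E$, $|h(x)-h(y)|\le C|x-y|^{\mu}\le C|C_j|^{\mu}$, whence $\bigl|h(C_j\cap E)\bigr|\le C|C_j|^{\mu}\le C\delta^{\mu}$; thus $\{h(C_j\cap E)\}_{j\in J}$ is in fact a $(C\delta^{\mu})$-covering of $h(E)$, and
\[
\sum_{j\in J}\bigl|h(C_j\cap E)\bigr|^{s/\mu}\ \le\ \sum_{j\in J}\bigl(C|C_j|^{\mu}\bigr)^{s/\mu}\ =\ C^{s/\mu}\sum_{j\in J}|C_j|^{s}\ <\ C^{s/\mu}\,\varepsilon .
\]
Hence $H_{C\delta^{\mu},\,s/\mu}\bigl(h(E)\bigr)\le C^{s/\mu}\varepsilon$. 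Letting $\delta\to0$ (so that $C\delta^{\mu}\to0$) and then $\varepsilon\to0$ yields $H_{s/\mu}\bigl(h(E)\bigr)=0$, i.e. $\dim_H\bigl(h(E)\bigr)\le s/\mu$; since $s>\dim_H(E)$ was arbitrary, this is the asserted inequality. Equivalently, the push-forward of any $\delta$-covering of $E$ is a $(C\delta^{\mu})$-covering of $h(E)$ whose $(s/\mu)$-total length is at most $C^{s/\mu}$ times the original $s$-total length, so $H_{s/\mu}(h(E))\le C^{s/\mu}H_s(E)$, from which the dimension bound follows at once.

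I do not expect a genuine obstacle here, as the argument is elementary; the only points needing a little care are pure bookkeeping: restricting the covering sets to those that actually meet $E$ so that $h$ acts on them; tracking that the covering scale is multiplied by the factor $\delta\mapsto C\delta^{\mu}$, which still tends to $0$ with $\delta$; and the harmless degenerate case in which some $C_j\cap E$ is a single point, so that $h(C_j\cap E)$ has zero diameter and contributes nothing to the $(s/\mu)$-sum (it may simply be dropped, or replaced by an arbitrarily small interval around that point).
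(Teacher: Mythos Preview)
Your argument is correct and is exactly the standard covering proof one finds in Falconer. Note that the paper does not actually supply its own proof of this lemma: it merely quotes the statement and cites \cite[Proposition~2.3]{Falconer}, so there is nothing to compare against beyond observing that your proof is the one Falconer gives.
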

For more details and for an extensive exposure of the Hausdorff measure and the properties of the Hausdorff dimension we recommend Falconer's book \cite{Falconer}. 

\section{Some basic metric properties of $\theta$-expansions}

Let us fix $\theta^2 = \displaystyle\frac{1}{m}$, $m \in \mathbb{N}_+$. 
Putting $\mathbb{N}_m = \{m, m+1, \ldots\}$, $m \in \mathbb{N}_+$, the partial quotients $a_n$, $n \in \mathbb{N}_+$, take positive integer values in $\mathbb{N}_m$. 

For any $n \in \mathbb{N}_+$ and 
$(a_1, \ldots, a_n) \in \mathbb{N}_m^n$, let 
\begin{equation*}
I_n \left( a_1, \ldots, a_n \right) = \{x \in \Omega:  a_1(x) = a_1, \ldots, a_n(x) = a_n \}
\end{equation*}
be the $n$-th order fundamental interval. For $\theta$-expansions, such intervals generate the most natural partition of the interval $[0, \theta]$. 

From the definition of $T_{\theta}$, (\ref{1.5}) and (\ref{1.6}) we have for any $n \in \mathbb{N}_+$ and $(a_1, \ldots, a_n) \in \mathbb{N}_m^n$, 
\begin{equation}
I_n(a_1, \ldots, a_n) = \left\{
\begin{array}{lll}
	\left[ \displaystyle \frac{p_n}{q_n}, \frac{p_n+ \theta p_{n-1}}{q_n+ \theta q_{n-1}} \right)  & \quad \mbox{if $n$ is even}, \\
	\\
	\left(\displaystyle \frac{p_n+ \theta p_{n-1}}{q_n+ \theta q_{n-1}}, \frac{p_n}{q_n} \right] & \quad \mbox{if $n$ is odd}. \\
\end{array}
\right. \label{3.1}
\end{equation}
Using (\ref{1.7}) we get 
\begin{eqnarray}
\left|I_n\left(a_1, \ldots, a_n\right)\right| 
= \frac{\theta}{q_n (q_n + \theta q_{n-1})} \label{3.2}
\end{eqnarray}
and 
\begin{equation} \label{3.03}
\frac{\theta}{ \left( 1+ \theta^2 \right) q^2_n } \leq 
\left|I_n\left(a_1, \ldots, a_n\right)\right| 
\leq 
\frac{\theta}{q^2_n}.
\end{equation}

By (\ref{3.1}), the endpoints of the interval 
$I_{n+1}(a_1, \ldots, a_n, k)$, $k \geq m$, are $\displaystyle\frac{p_{n+1}}{q_{n+1}}$ and $\displaystyle\frac{p_{n+1}+ \theta p_{n}}{q_{n+1}+\theta q_{n}}$ with $p_{n+1} = k \theta p_n + p_{n-1}$ and $q_{n+1} = k \theta q_n + q_{n-1}$.
So we obtain 
\begin{equation*}
\frac{p_{n+1}}{q_{n+1}} = \frac{k \theta p_n + p_{n-1}}{k \theta q_n + q_{n-1}}, \quad
\frac{p_{n+1}+\theta p_n}{q_{n+1}+\theta q_n} = \frac{(k+1) \theta p_n + p_{n-1}}{(k+1) \theta q_n + q_{n-1}}
\end{equation*}
and
\begin{equation}
\left|I_{n+1}\left(a_1, \ldots, a_n, k\right)\right| 
= \frac{\theta}{(k\theta q_n + q_{n-1})((k+1)\theta q_n + q_{n-1})}.  \label{3.3}
\end{equation}

\begin{lemma} \label{lema.3.01}
For any $n \in \mathbb{N}_+$ and $(a_1, \ldots, a_n) \in \mathbb{N}^n_m$, we have
\begin{itemize}
\item[(i)]  $q_n(a_1, \ldots, a_n) \geq (m+1)^{\frac{n-1}{2}}$;
\item[(ii)] $\displaystyle \frac{(a_k+m)\theta}{2} \leq \frac{q_n(a_1, \ldots, a_n)}{q_{n-1}(a_1, \ldots, a_{k-1}, a_{k+1}, \ldots, a_n)} \leq (a_k+m) \theta$, $1 \leq k \leq n$.
\end{itemize}
\end{lemma}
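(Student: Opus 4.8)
\medskip

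\noindent The plan is to treat the two parts separately. For part (i), substitute $q_{n-1}=a_{n-1}\theta\,q_{n-2}+q_{n-3}$ into~(\ref{1.6}); using $a_n,a_{n-1}\ge m$ and $\theta^2=1/m$ this gives, for every $n\ge 2$,
\[
q_n=a_na_{n-1}\theta^2\,q_{n-2}+a_n\theta\,q_{n-3}+q_{n-2}\ \ge\ (m^2\theta^2+1)\,q_{n-2}=(m+1)\,q_{n-2}.
\]
Together with the base cases $q_1=a_1\theta\ge m\theta=\sqrt m\ge 1$ and $q_2\ge m^2\theta^2+1=m+1$, an induction in steps of two yields $q_n\ge(m+1)^{(n-1)/2}$.

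For part (ii), abbreviate $\delta_k:=q_{n-1}(a_1,\ldots,a_{k-1},a_{k+1},\ldots,a_n)$ and, for $1\le j\le n$, let $M_j$ be the $2\times2$ matrix with first row $(a_j\theta,\,1)$ and second row $(1,\,0)$. By~(\ref{1.5}) and~(\ref{1.6}), the $2\times 2$ matrix with rows $(q_n,q_{n-1})$ and $(p_n,p_{n-1})$ equals $M_1M_2\cdots M_n$. Splitting this product at index $k$, and likewise splitting $M_1\cdots M_{k-1}M_{k+1}\cdots M_n$ (whose $(1,1)$-entry is $\delta_k$), and setting $\beta:=q_{n-k}(a_{k+1},\ldots,a_n)$, $\gamma:=p_{n-k}(a_{k+1},\ldots,a_n)$, one reads off
\[
q_n(a_1,\ldots,a_n)=q_k(a_1,\ldots,a_k)\,\beta+q_{k-1}(a_1,\ldots,a_{k-1})\,\gamma,
\]
\[
\delta_k=q_{k-1}(a_1,\ldots,a_{k-1})\,\beta+q_{k-2}(a_1,\ldots,a_{k-2})\,\gamma,
\]
empty continuants being evaluated through $q_0=1$, $q_{-1}=0$, $p_0=0$ (which also takes care of the boundary positions $k=1$ and $k=n$). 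Substituting $q_k=a_k\theta\,q_{k-1}+q_{k-2}$ and dividing numerator and denominator by $q_{k-1}\beta$ gives
\[
\frac{q_n(a_1,\ldots,a_n)}{\delta_k}=\frac{a_k\theta+s+t}{1+st},\qquad s:=\frac{q_{k-2}(a_1,\ldots,a_{k-2})}{q_{k-1}(a_1,\ldots,a_{k-1})},\qquad t:=\frac{\gamma}{\beta}.
\]

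A short induction on~(\ref{1.6}) (again using $a_j\ge m$ and $\theta^2=1/m$) shows $0\le s\le\theta$ and $0\le t\le\theta$, and clearly $a_k\theta\ge m\theta=1/\theta$. It then suffices to bound $f(s,t):=(a_k\theta+s+t)/(1+st)$ on $[0,\theta]^2$. For fixed $s$ the sign of $\partial f/\partial t$ equals that of $1-a_k\theta s-s^2$, which does not depend on $t$, so $f(s,\cdot)$ is monotone on $[0,\theta]$; arguing the same way in $s$, the extrema of $f$ over $[0,\theta]^2$ are attained at its corners. Evaluating there gives $\max f=a_k\theta+\theta=(a_k+1)\theta\le(a_k+m)\theta$ and $\min f=\min\{a_k\theta,\ m(a_k+2)\theta/(m+1)\}$; since $a_k\ge m$ one has $a_k\theta\ge(a_k+m)\theta/2$, while the inequality $m(a_k+2)\theta/(m+1)\ge(a_k+m)\theta/2$ reduces to $a_k(m-1)\ge m(m-3)$, which again holds since $a_k\ge m$. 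This yields the two claimed inequalities.

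I expect the main obstacle to be the continuant bookkeeping in part (ii): getting the cut-and-multiply identity and its degenerate cases $k\in\{1,n\}$ exactly right, and confirming that the monotonicity argument really does force the extrema of $f$ onto the four corners of the square. The numerical comparisons that then remain are routine.
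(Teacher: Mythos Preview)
Your argument is correct. Part~(i) is essentially the paper's proof: both establish $q_n\ge(m+1)\,q_{n-2}$ and close by induction in steps of two, the only difference being that the paper first observes $q_{n-1}\ge q_{n-2}/\theta$ and feeds this into~(\ref{1.6}), whereas you expand two steps of~(\ref{1.6}) at once.

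Part~(ii) is where you diverge from the paper. The paper proceeds by induction on $n-k$: it verifies the cases $n-k=0$ and $n-k=1$ by direct manipulation of~(\ref{1.6}) together with the estimate $q_{k-2}\le\theta q_{k-1}$, and then appeals to an induction argument that is stated but not written out. Your route is structurally different: you use the matrix product $M_1\cdots M_n$ to obtain the closed-form identity $q_n/\delta_k=(a_k\theta+s+t)/(1+st)$ with $s,t\in[0,\theta]$, and then optimise the rational function over the square. This bypasses the induction entirely and treats all $k$ uniformly, including the boundary cases $k=1$ and $k=n$ via $q_{-1}=0$, $p_0=0$. The trade-off is that your approach front-loads a continuant identity that needs some care, while the paper's approach is more elementary step by step but leaves the general inductive passage to the reader. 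Your monotonicity argument for $f$ is sound: since $\partial f/\partial t$ has the $t$-independent sign of $1-a_k\theta s-s^2$, the extrema over $[0,\theta]^2$ do lie at the four corners, and the corner evaluations give exactly the bounds claimed.
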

\begin{proof}
$(i)$
Since by (\ref{1.6}) for any $n \geq 2$ we have $q_{n-1} \geq \frac{q_{n-2}}{\theta}$ it follows that 
\[
q_n = a_n \theta q_{n-1} + q_{n-2} \geq a_n \theta \frac{q_{n-2}}{\theta} + q_{n-2} = (a_n+1)q_{n-2} \geq (m+1) q_{n-2}. 
\]
The successive application of this inequality gives us
\begin{eqnarray*}
&&q_{2n} \geq (m+1)^n q_0 = (m+1)^n, \\
&&q_{2n+1} \geq (m+1)^n q_1 = (m+1)^n a_1 \theta \geq (m+1)^n m \theta \geq (m+1)^n. 
\end{eqnarray*}

$(ii)$
For any fixed $n \in \mathbb{N}_+$, we prove this result by induction on $n-k$. 
When $n-k=0$, by (\ref{1.6}), 
\[
\frac{q_k(a_1, \ldots, a_k)}{q_{k-1}(a_1, \ldots, a_{k-1})} = 
\frac{a_k \theta q_{k-1}(a_1, \ldots, a_{k-1}) + q_{k-2}(a_1, \ldots, a_{k-2})}{q_{k-1}(a_1, \ldots, a_{k-1})}  
\]
Since $q_{k-2} \leq {\theta} q_{k-1}$, 
\[
\frac{(a_k+m)\theta}{2} \leq \frac{q_k(a_1, \ldots, a_k)}{q_{k-1}(a_1, \ldots, a_{k-1})} \leq (a_k+m) \theta.
\]


%
Now, since $a_{k+1} \geq m$ and $q_{k-2} \leq {\theta} q_{k-1}$,
\begin{eqnarray*}
\frac{q_{k+1}(a_1, \ldots, a_{k+1})}{q_{k}(a_1, \ldots, a_{k-1}, a_{k+1})} &=& 
\frac{a_{k+1} \theta q_{k}(a_1, \ldots, a_{k}) + q_{k-1}(a_1, \ldots, a_{k-1})}{a_{k+1} \theta q_{k-1}(a_1, \ldots, a_{k-1}) + q_{k-2}(a_1, \ldots, a_{k-2})} \\
&=& \frac{(a_{k+1}a_k \theta^2 +1 )q_{k-1}(a_1, \ldots, a_{k-1}) + a_{k+1} \theta q_{k-2}(a_1, \ldots, a_{k-2})}{a_{k+1} \theta q_{k-1}(a_1, \ldots, a_{k-1}) + q_{k-2}(a_1, \ldots, a_{k-2})} \\
&\leq& a_k \theta + 
\frac{q_{k-1}(a_1, \ldots, a_{k-1}) + a_{k+1} \theta q_{k-2}(a_1, \ldots, a_{k-2})}{a_{k+1} \theta q_{k-1}(a_1, \ldots, a_{k-1})} \\
&=& a_k \theta + \frac{1}{a_{k+1}\theta} + \frac{q_{k-2}}{q_{k-1}}\leq (a_k+m) \theta,
\end{eqnarray*}
\begin{eqnarray*}
\frac{q_{k+1}(a_1, \ldots, a_{k+1})}{q_{k}(a_1, \ldots, a_{k-1}, a_{k+1})} &=& 
\frac{(a_{k+1}a_k \theta^2 +1 )q_{k-1}(a_1, \ldots, a_{k-1}) + a_{k+1} \theta q_{k-2}(a_1, \ldots, a_{k-2})}{a_{k+1} \theta q_{k-1}(a_1, \ldots, a_{k-1}) + q_{k-2}(a_1, \ldots, a_{k-2})} \\
&\geq& 
\frac{(a_{k+1}a_k \theta^2 +1 )q_{k-1}(a_1, \ldots, a_{k-1})}{a_{k+1} \theta q_{k-1}(a_1, \ldots, a_{k-1}) + \theta q_{k-1}(a_1, \ldots, a_{k-1})} \\
&=& \frac{a_{k+1}a_k \theta^2 +1 }{(a_{k+1} +1) \theta } \geq \frac{(a_k+m)\theta}{2}.
\end{eqnarray*}
Using (\ref{1.6}) and applying an induction argument, we get the desired result. 
\end{proof}

\section{Improvement and generalization of a Jarnik result}

For any $M \in \mathbb{N}_m$ and $n \in \mathbb{N}_+$, let 
\[
E_M :=\left\lbrace x \in \Omega: \, m \leq a_n(x) \leq M \mbox{ for any } n \geq 1 \right\rbrace 
\]
and
\[
E^n_M :=\left\lbrace x \in \Omega: \, m \leq a_i(x) \leq M \mbox{ for } i = 1, \ldots, n \right\rbrace. 
\]
Clearly we have for any $n \in \mathbb{N}_+$ 
\begin{equation}
E^n_M \supset E^{n+1}_M, \quad E_M = \displaystyle \bigcap_{n \geq 1} E^n_M \label{3.4}
\end{equation}
\begin{equation}
E^n_M = \sum_{ \begin{array}{ll}
m \leq a_i \leq M \\
i=1, \ldots, n
\end{array}
} 
I_n(a_1, \ldots, a_n). \label{3.5}
\end{equation}

The exact calculation of the Hausdorff dimension of a set is in most cases a difficult problem. It is however often possible to obtain lower and upper bounds for the Hausdorff dimension. 
In the sequel we shall obtain a result that improves Proposition 4 obtained by Jarnik \cite{J-1928}. 
In the case of RCF-expansion ($\theta = 1$), Jarnik established that for any $M > 8$
\begin{equation}
1-\frac{4}{M \log 2} \leq \dim_H (E_M) \leq 1 - \frac{1}{8M\log M}. \label{3.6} 
\end{equation}

\subsection{The lower bound}
The following lemma allows us to bound from below the Hausdorff dimension of $E_M$.

\begin{lemma} \label{lema.3.1}
Let us fix a real number $s \in (0, 1)$ and a positive integer $M \geq m$. 
If for any $n \geq 1$ and for all digits $a_1, \ldots, a_{n-1}$ ($m \leq a_i \leq M$, $i =1, \ldots, n-1$) the following inequality holds
\begin{equation}
\left| I_{n-1} (a_1, \ldots, a_{n-1}) \right|^s \leq \sum_{k=m}^{M} \left| I_{n} (a_1, \ldots, a_{n-1}, k) \right|^s, \label{3.7} 
\end{equation}
then $\dim_H (E_M) \geq s$. 
\end{lemma}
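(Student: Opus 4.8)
The plan is to construct a probability measure $\mu$ supported on $E_M$ and then apply the mass distribution principle (Frostman's lemma): if there is a constant $c > 0$ such that $\mu(B) \leq c |B|^s$ for every sufficiently small interval $B$, then $H_s(E_M) > 0$, hence $\dim_H(E_M) \geq s$. The hypothesis (\ref{3.7}) is exactly the consistency condition needed to build such a $\mu$ by assigning masses to the fundamental intervals.

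\textbf{Construction of the measure.} I would define $\mu$ on the fundamental intervals by a recursive normalization. Set $\mu([0,\theta]) = 1$, and more generally, for each admissible block $(a_1, \ldots, a_{n})$ with $m \le a_i \le M$, distribute the mass $\mu(I_{n-1}(a_1,\ldots,a_{n-1}))$ among the sub-intervals $I_n(a_1,\ldots,a_{n-1},k)$, $m \le k \le M$, proportionally to $|I_n(a_1,\ldots,a_{n-1},k)|^s$. Concretely,
\begin{equation*}
\mu(I_n(a_1,\ldots,a_{n-1},k)) = \mu(I_{n-1}(a_1,\ldots,a_{n-1})) \cdot \frac{|I_n(a_1,\ldots,a_{n-1},k)|^s}{\sum_{j=m}^{M}|I_n(a_1,\ldots,a_{n-1},j)|^s}.
\end{equation*}
This is a consistent assignment (the masses of the children sum to the mass of the parent), so by Kolmogorov/Carathéodory extension it defines a Borel probability measure, and by construction $\mu$ is carried by $\bigcap_n E_M^n = E_M$. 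Iterating the recursion and using hypothesis (\ref{3.7}) at each level — which says the denominator $\sum_{j=m}^M |I_n(\ldots,j)|^s \ge |I_{n-1}(\ldots)|^s$ — gives the key bound
\begin{equation*}
\mu(I_n(a_1,\ldots,a_n)) \;\le\; \prod_{t=1}^{n} \frac{|I_t(a_1,\ldots,a_t)|^s}{|I_{t-1}(a_1,\ldots,a_{t-1})|^s} \;=\; |I_n(a_1,\ldots,a_n)|^s,
\end{equation*}
since the product telescopes and $|I_0| = \theta \le 1$ (or one absorbs a harmless constant). So $\mu(I_n) \le |I_n|^s$ for every fundamental interval.

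\textbf{From fundamental intervals to arbitrary balls.} The remaining work is to pass from this estimate on the $I_n$ to the estimate $\mu(B) \le c|B|^s$ for an arbitrary small interval $B \subset [0,\theta)$. Given $B$, choose $n$ maximal so that $B$ meets at most a bounded number of level-$n$ fundamental intervals; equivalently, let $n$ be such that $B$ is comparable in length to some $I_n$ it intersects. The standard argument: $B$ intersects at most finitely many level-$n$ cylinders, and for those it does, either $B$ is contained in (a bounded number of) level-$(n-1)$ cylinders, or $B$ spans several consecutive level-$n$ children $I_n(a_1,\ldots,a_{n-1},k)$. In the spanning case one uses that consecutive children $I_n(\ldots,k)$ and $I_n(\ldots,k+1)$ are adjacent and, by (\ref{3.3}) together with Lemma~\ref{lema.3.01}, have comparable sizes and there are at most $M - m + 1$ of them, each of length $\gtrsim |B|$; summing $\mu$ over them and using $\mu(I_n) \le |I_n|^s$ plus $\sum |I_n(\ldots,k)|^s$ being controlled by a constant times $(\max_k |I_n(\ldots,k)|)^s \lesssim |B|^s$ (here $s<1$ helps, as does the gap/regularity structure of the intervals) yields $\mu(B) \le c |B|^s$.

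\textbf{Main obstacle.} The delicate point is precisely this last step — the comparison between an arbitrary interval $B$ and the fundamental intervals, and controlling the sum of $\mu$-masses of the cylinders $B$ overlaps. One must use the geometry of the $I_n$: that children of a given cylinder are ordered, adjacent, and of comparable (within a bounded ratio depending on $M$) length, which follows from (\ref{3.2}), (\ref{3.3}) and the bounds on $q_n/q_{n-1}$ in Lemma~\ref{lema.3.01}(ii). I expect the argument will produce a constant $c = c(M,\theta,s)$, which is harmless since we only need $H_s(E_M) > 0$. With the mass distribution principle this gives $\dim_H(E_M) \ge s$, completing the proof.
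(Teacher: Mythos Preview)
Your approach is correct but takes a genuinely different route from the paper's. The paper does not build a measure at all: it simply sums hypothesis~(\ref{3.7}) over all admissible $(a_1,\ldots,a_{n-1})$ to obtain $\Delta_s(\mathcal{C}_{n-1})\le\Delta_s(\mathcal{C}_n)$, so the $s$-total lengths of the natural coverings $\mathcal{C}_n$ of $E_M$ by level-$n$ cylinders form a non-decreasing sequence starting at $\Delta_s(\mathcal{C}_0)=\theta^s$; from this it concludes $H_{\delta,s}(E_M)\ge\theta^s$ and hence $\dim_H(E_M)\ge s$. Your mass-distribution argument is longer but more self-contained: the step you flag as the ``main obstacle'' --- passing from cylinders to arbitrary intervals --- is precisely the step the paper leaves implicit (the infimum defining $H_{\delta,s}$ is over \emph{all} $\delta$-covers, not only the $\mathcal{C}_n$, so the paper's conclusion tacitly relies on the same geometric comparison you spell out). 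What the paper's approach buys is brevity and a direct link to Jarnik's original argument; what yours buys is an explicit Frostman measure on $E_M$ satisfying $\mu(I_n)\le|I_n|^s$, which makes the lower bound rigorous without appeal to the optimality of natural covers and could be reused for finer local-dimension statements.
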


\begin{proof}
By (\ref{3.5}) and (\ref{3.4}) for any $n \geq 1$ all the $n$-th order intervals $I_n$ form a $\delta$-covering $\mathcal{C}_n$ for $E^n_M$ and implicitly for $E_M$. 
Any interval $I_n$ belonging to $\mathcal{C}_n$ is contained in exactly one of the intervals of $\mathcal{C}_{n-1}$ and contains at least two intervals belonging to $\mathcal{C}_{n+1}$. 
The maximum of the lengths of the intervals in $\mathcal{C}_n$ tends to $0$ when $n$ tends to infinity. 

From (\ref{3.7}) we get 
\[
\theta^s = \Delta_s(\mathcal{C}_0) \leq \ldots \leq \Delta_s(\mathcal{C}_{n-1}) \leq \Delta_s(\mathcal{C}_n) \leq \ldots
\]
where $\mathcal{C}_0 = \{ I_0 \}$ with $I_0 = [0, \theta]$. 

We deduce that the $s$-covered length of $E_M$, $H_{\delta', s} (E_M) \geq \theta^s$ for any $\delta' \leq \delta $. 
By letting $\delta'$ tend to $0$ we have 
\[
H_s(E_M) = \lim_{\delta' \to 0} H_{\delta', s} (E_M) \geq \theta^s >0,
\] 
which implies that the Hausdorff dimension of $E_M$ is at least equal to $s$.
\end{proof}
\begin{proposition} \label{prop.3.2}
For $s = 1 - \displaystyle \frac{2(m+1)}{M+1} \frac{1}{\log(m+1)}$, where $M > 2m+1$, the Hausdorff dimension $\dim_H(E_M) \geq s$. 
\end{proposition}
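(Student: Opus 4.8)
The strategy is to verify the hypothesis \eqref{3.7} of Lemma \ref{lema.3.1} for the stated value of $s$, so that the conclusion $\dim_H(E_M) \geq s$ follows immediately. Fix $n \geq 1$ and digits $a_1, \ldots, a_{n-1} \in \mathbb{N}_m$ with $a_i \leq M$. Write $q_{n-1} = q_{n-1}(a_1, \ldots, a_{n-1})$. Using \eqref{3.03} to bound $\left| I_{n-1}(a_1, \ldots, a_{n-1}) \right|$ from above and \eqref{3.3} to express $\left| I_n(a_1, \ldots, a_{n-1}, k) \right|$ exactly, the desired inequality \eqref{3.7} reduces to a statement comparing $\left( \theta / q_{n-1}^2 \right)^s$ with $\sum_{k=m}^M \left( \theta / \big( (k\theta q_{n-1} + q_{n-2})((k+1)\theta q_{n-1} + q_{n-2}) \big) \right)^s$. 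Since $q_{n-2} \leq \theta q_{n-1}$ (from \eqref{1.6}), each factor $(k\theta q_{n-1} + q_{n-2})$ is at most $(k+1)\theta q_{n-1}$, hence each summand on the right is at least $\left( \theta / \big( (k+1)^2 \theta^2 q_{n-1}^2 \big) \right)^s = q_{n-1}^{-2s} \theta^{-s} (k+1)^{-2s}$. Thus it suffices to show
\[
\theta^s \leq \theta^{-s} \sum_{k=m}^M (k+1)^{-2s}, \quad \text{i.e.,} \quad \theta^{2s} \leq \sum_{k=m}^M (k+1)^{-2s}.
\]

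Next I would estimate the sum $\sum_{k=m}^M (k+1)^{-2s}$ from below by an integral: $\sum_{k=m}^M (k+1)^{-2s} \geq \int_{m+1}^{M+1} t^{-2s}\, dt = \frac{(M+1)^{1-2s} - (m+1)^{1-2s}}{1-2s}$ (valid since $s < 1/2$ is forced by the value of $s$ when, say, $m=1$; for general $m$ one should check $2s < 1$ or otherwise handle the case $2s \geq 1$ separately, though for $M$ large the chosen $s$ is close to $1$, so in fact $2s$ may exceed $1$ and one integrates $t^{-2s}$ with $1 - 2s < 0$, giving $\frac{(m+1)^{1-2s} - (M+1)^{1-2s}}{2s-1}$, which is still positive). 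The cleaner route, and the one I expect the authors take, is to write $2s = 2 - \frac{4(m+1)}{(M+1)\log(m+1)}$ and set $\varepsilon := \frac{2(m+1)}{(M+1)\log(m+1)}$, so $s = 1 - \varepsilon$. Then $\theta^{2s} = \theta^{2}\,\theta^{-2\varepsilon} = \frac{1}{m}\, m^{\varepsilon}$, while the sum $\sum_{k=m}^M (k+1)^{-2s} = \sum_{k=m}^M (k+1)^{-2+2\varepsilon} \geq \sum_{k=m}^M \frac{(k+1)^{2\varepsilon}}{(k+1)^2}$; comparing term by term with a telescoping bound $\frac{1}{(k+1)^2} \geq \frac{1}{k} - \frac{1}{k+1}$ is too lossy, so instead bound $(k+1)^{2\varepsilon} \geq 1$ and use $\sum_{k=m}^M (k+1)^{-2} \geq \int_{m+1}^{M+2} t^{-2}\, dt = \frac{1}{m+1} - \frac{1}{M+2} \geq \frac{1}{m+1}\left(1 - \frac{m+1}{M+2}\right)$. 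Combined with the elementary inequality $m^{\varepsilon} \leq 1 + \varepsilon\log m \cdot \text{(something)}$... — here care is needed.

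The cleanest approach is to use the convexity/logarithm inequality $a^t \le 1 + t(a-1)$ is false for $a>1$; rather use $m^{\varepsilon} = e^{\varepsilon \log m} \le \frac{1}{1 - \varepsilon \log m}$ when $\varepsilon \log m < 1$, which holds since $\varepsilon \log m = \frac{2(m+1)\log m}{(M+1)\log(m+1)} \le \frac{2(m+1)}{M+1} < 1$ for $M > 2m+1$. Then it suffices to establish
\[
\frac{1}{m}\cdot\frac{1}{1 - \varepsilon \log m} \le \frac{1}{m+1}\Big(1 - \frac{m+1}{M+2}\Big),
\]
or after rearranging, a polynomial inequality in $m$ and $M$ that holds for $M > 2m+1$. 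I expect the \textbf{main obstacle} to be exactly this final elementary but delicate calculation: choosing the right intermediate bounds (integral comparison for the sum, exponential/logarithm estimates for $\theta^{2\varepsilon} = m^{\varepsilon}$) so that the slack is enough to absorb the error terms for \emph{all} $m \geq 1$ and \emph{all} $M > 2m+1$ simultaneously, rather than just asymptotically in $M$. Once \eqref{3.7} is verified for this $s$, Lemma \ref{lema.3.1} gives $\dim_H(E_M) \geq s$ and the proof is complete.
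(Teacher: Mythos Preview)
Your reduction is too lossy and the resulting inequality is actually \emph{false} in the regime you need it. After you replace $|I_{n-1}|$ by the upper bound $\theta/q_{n-1}^{2}$ from \eqref{3.03} and bound each denominator $k\theta q_{n-1}+q_{n-2}\le (k+1)\theta q_{n-1}$, the task becomes $\theta^{2s}\le \sum_{k=m}^{M}(k+1)^{-2s}$. But for the value of $s$ in the proposition and $M$ large, $s$ is close to $1$, so the left side is close to $\theta^{2}=1/m$ while the right side is close to $\sum_{j\ge m+1} j^{-2}<\int_{m}^{\infty}t^{-2}\,dt=1/m$. Thus the reduced inequality fails, and no amount of care in the ``final elementary calculation'' will rescue it: the gap is not in the endgame but in the crude intermediate bounds, which discard the $q_{n-2}$-dependence that is essential.

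The paper instead keeps the exact expressions \eqref{3.2} and \eqref{3.3} and evaluates the \emph{unpowered} sum $\sum_{k=m}^{M}|I_{n}(a_{1},\ldots,a_{n-1},k)|$ by a telescoping identity (their \eqref{3.9}), obtaining precisely $|I_{n-1}|\bigl(1-\tfrac{m\theta q_{n-1}+q_{n-2}}{(M+1)\theta q_{n-1}+q_{n-2}}\bigr)$; this uses crucially that $m\theta^{2}=1$, so $m\theta q_{n-1}+q_{n-2}=\theta^{-1}(q_{n-1}+\theta q_{n-2})$. To pass to $s$-th powers they pull out the \emph{smallest} numerator factor $(m\theta q_{n-1}+q_{n-2})^{1-s}((m+1)\theta q_{n-1}+q_{n-2})^{1-s}$, which produces the clean multiplier $(m+1)^{1-s}$. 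The problem then reduces to $(m+1)^{1-s}\bigl(1-\tfrac{m+1}{M+1}\bigr)\ge 1$, and the choice $1-s=\tfrac{2(m+1)}{(M+1)\log(m+1)}$ together with $-\log(1-x)\le 2x$ for $x\in(0,1/2)$ finishes it. The telescoping step is the idea you are missing.
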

\begin{proof}
We check if the assumption from Lemma \ref{lema.3.1} is fulfilled. The inequality to be proved is (\ref{3.7}). 
Using (\ref{3.2}) and (\ref{3.3}) we obtain 
\begin{equation} \label{3.8}
\frac{\theta^s}{q^s_{n-1} (q_{n-1} + \theta q_{n-2})^s} \leq 
\sum_{k=m}^{M} \frac{\theta^s}{(k \theta q_{n-1} + q_{n-2})^s  \,  ((k+1) \theta q_{n-1} + q_{n-2})^s}
\end{equation}
Using (\ref{1.7}) we get 
\begin{eqnarray} \label{3.9}
&&\sum_{k=m}^{M} \frac{1}{(k \theta q_{n-1} + q_{n-2})  \,  ((k+1) \theta q_{n-1} + q_{n-2})} 
= \frac{(-1)^{n-1}}{\theta} \sum_{k=m}^{M} \left( \frac{k \theta p_{n-1} + p_{n-2}}{k \theta q_{n-1} + q_{n-2}} - \frac{(k+1) \theta p_{n-1} + p_{n-2}}{(k+1) \theta q_{n-1} + q_{n-2}} \right) \nonumber \\
&&= \frac{(-1)^{n-1}}{\theta} 
\left( \frac{m \theta p_{n-1} + p_{n-2}}{m \theta q_{n-1} + q_{n-2}} - \frac{(M+1) \theta p_{n-1} + p_{n-2}}{(M+1) \theta q_{n-1} + q_{n-2}} \right) \nonumber  \\
&&= \frac{(-1)^{n-1}}{\theta} 
\left[  \left( \frac{m \theta p_{n-1} + p_{n-2}}{m \theta q_{n-1} + q_{n-2}} - \frac{p_{n-1}}{q_{n-1}} \right) + \left( \frac{p_{n-1}}{q_{n-1}} -  \frac{(M+1) \theta p_{n-1} + p_{n-2}}{(M+1) \theta q_{n-1} + q_{n-2}} \right) \right]  \nonumber  \\
&&= \frac{1}{\theta} 
\left( \frac{1}{q_{n-1}(m \theta q_{n-1} + q_{n-2})} - \frac{1}{q_{n-1}((M+1) \theta q_{n-1} + q_{n-2})} \right). 
\end{eqnarray}
Using (\ref{3.9}) directly computation yields that 
\begin{eqnarray*}
&&\sum_{k=m}^{M} \frac{1}{(k \theta q_{n-1} + q_{n-2})^s  \,  ((k+1) \theta q_{n-1} + q_{n-2})^s} = 
\sum_{k=m}^{M} \frac{(k \theta q_{n-1} + q_{n-2})^{1-s} \,  ((k+1) \theta q_{n-1} + q_{n-2})^{1-s}}{(k \theta q_{n-1} + q_{n-2})  \,  ((k+1) \theta q_{n-1} + q_{n-2})} \\
&&\geq 
\sum_{k=m}^{M} \frac{(m \theta q_{n-1} + q_{n-2})^{1-s} \,  ((m+1) \theta q_{n-1}+q_{n-2})^{1-s}}{(k \theta q_{n-1} + q_{n-2})  \,  ((k+1) \theta q_{n-1} + q_{n-2})} \\
&&\geq 
\sum_{k=m}^{M} \frac{(m \theta q_{n-1} + q_{n-2})^{1-s} \,  ((m+1) \theta q_{n-1})^{1-s}}{(k \theta q_{n-1} + q_{n-2})  \,  ((k+1) \theta q_{n-1} + q_{n-2})} \\
&&= (m \theta q_{n-1} + q_{n-2})^{1-s} \,  ((m+1) \theta q_{n-1})^{1-s} \frac{1}{\theta} \frac{1}{q_{n-1}(m \theta q_{n-1} + q_{n-2})} \left[ 1- \frac{m \theta q_{n-1} + q_{n-2}}{(M+1) \theta q_{n-1} + q_{n-2}} \right] \\
&&= \frac{(m+1)^{1-s}}{(\theta q_{n-1})^s \, (m \theta q_{n-1} + q_{n-2})^s} \left(  1- \frac{m \theta q_{n-1} + q_{n-2}}{(M+1) \theta q_{n-1} + q_{n-2}} \right) \\
&&=\frac{(m+1)^{1-s}}{(\theta q_{n-1})^s} \frac{\theta^s}{(q_{n-1} + \theta q_{n-2})^s} \left( 1- \frac{m \theta q_{n-1} + q_{n-2}}{(M+1) \theta q_{n-1} + q_{n-2}} \right) \\
&&= \frac{(m+1)^{1-s}}{( q_{n-1})^s \, (q_{n-1} + \theta q_{n-2})^s } \left( 1- \frac{ q_{n-1} + \theta q_{n-2}}{(M+1) \theta^2 q_{n-1} + \theta q_{n-2}} \right).
\end{eqnarray*}
To obtain (\ref{3.8}) we have to show that 
\begin{equation} \label{3.10}
(m+1)^{1-s} \left( 1- \frac{ q_{n-1} + \theta q_{n-2}}{(M+1) \theta^2 q_{n-1} + \theta q_{n-2}} \right) \geq 1.
\end{equation}
Since $q_{n-2} \leq \theta q_{n-1}$ we get
\[
\frac{ q_{n-1} + \theta q_{n-2}}{(M+1) \theta^2 q_{n-1} + \theta q_{n-2}} \leq \frac{ q_{n-1} + \theta^2 q_{n-1}}{(M+1) \theta^2 q_{n-1}} = \frac{1+\theta^2}{(M+1) \theta^2} = \frac{m+1}{M+1},
\]
and 
\[
(m+1)^{1-s} \left( 1- \frac{ q_{n-1} + \theta q_{n-2}}{(M+1) \theta^2 q_{n-1} + \theta q_{n-2}} \right) \geq  (m+1)^{1-s} \left( 1 - \frac{m+1}{M+1} \right).
\]
Clearly 
\[
(m+1)^{1-s} \left( 1 - \frac{m+1}{M+1} \right) \geq 1 
\]
if and only if 
\[
(1-s) \log(m+1) \geq - \log \left( 1 - \frac{m+1}{M+1} \right). 
\]
Since $2x \geq - \log(1-x)$, for all $x \in (0, 1/2)$, by choosing 
$(1-s) \log(m+1) = 2 \displaystyle \frac{m+1}{M+1}$,
and assuming that $\displaystyle \frac{m+1}{M+1} \in \left(0, \frac{1}{2}\right)$, we obtain (\ref{3.10}) for $s = 1 - \displaystyle \frac{2(m+1)}{M+1} \frac{1}{\log(m+1)}$, with $M > 2m+1$.

\end{proof}

\subsection{The upper bound}
Now we bound from above the Hausdorff dimension of $E_M$ by applying the following lemma. 
\begin{lemma} \label{lema.3.3}
Let us fix a real number $s \in (0, 1)$ and a positive integer $M \geq m$.
If for any $n \geq 1$ and for all digits $a_1, \ldots, a_{n-1}$ $(m \leq a_i \leq M, \, i = 1, \ldots, n-1)$ the following inequality holds
\begin{equation}
\left| I_{n-1} (a_1, \ldots, a_{n-1}) \right|^s \geq \sum_{k=m}^{M} \left| I_{n} (a_1, \ldots, a_{n-1}, k) \right|^s, \label{3.11} 
\end{equation}
then $\dim_H (E_M) \leq s$.
\end{lemma}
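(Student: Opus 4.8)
The plan is to run the covering argument of Lemma~\ref{lema.3.1} in reverse: the reversed inequality~\eqref{3.11} forces the $s$-total lengths of the natural coverings of $E_M$ to be bounded above, and economical coverings then yield directly an upper bound for the Hausdorff dimension.

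First I would recall the set-up from the proof of Lemma~\ref{lema.3.1}. By \eqref{3.5} and \eqref{3.4}, for each $n$ the family $\mathcal{C}_n$ consisting of all intervals $I_n(a_1,\ldots,a_n)$ with $m\le a_i\le M$ covers $E^n_M$, hence covers $E_M=\bigcap_{n\ge1}E^n_M$. Combining \eqref{3.03} with Lemma~\ref{lema.3.01}(i) gives $|I_n(a_1,\ldots,a_n)|\le \theta q_n^{-2}\le \theta(m+1)^{-(n-1)}$, so the diameters of the members of $\mathcal{C}_n$ shrink to $0$ uniformly in the digits. Thus for every $\delta>0$ there is $n_\delta$ such that $\mathcal{C}_n$ is a $\delta$-covering of $E_M$ whenever $n\ge n_\delta$.

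Next I would iterate \eqref{3.11}. Grouping the $n$-th order intervals by their first $n-1$ digits and summing~\eqref{3.11} over all admissible $m\le a_i\le M$,
\[
\Delta_s(\mathcal{C}_n)=\sum_{a_1,\ldots,a_{n-1}}\ \sum_{k=m}^{M}|I_n(a_1,\ldots,a_{n-1},k)|^s\ \le\ \sum_{a_1,\ldots,a_{n-1}}|I_{n-1}(a_1,\ldots,a_{n-1})|^s=\Delta_s(\mathcal{C}_{n-1}),
\]
so $\Delta_s(\mathcal{C}_n)\le\Delta_s(\mathcal{C}_{n-1})\le\cdots\le\Delta_s(\mathcal{C}_0)=|[0,\theta]|^s=\theta^s$ for all $n$, the case $n=1$ of~\eqref{3.11} reading $\theta^s\ge\sum_{k=m}^{M}|I_1(k)|^s$. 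Hence $H_{\delta,s}(E_M)\le\Delta_s(\mathcal{C}_{n_\delta})\le\theta^s$ for every $\delta>0$, and letting $\delta\to0$ gives $H_s(E_M)\le\theta^s<\infty$.

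Finally I would upgrade finiteness at the exponent $s$ to vanishing beyond $s$. For $t>s$, writing $\rho_n:=\max_{\mathcal{C}_n}|I_n|\le\theta(m+1)^{-(n-1)}$, we get $\Delta_t(\mathcal{C}_n)\le\rho_n^{\,t-s}\,\Delta_s(\mathcal{C}_n)\le\theta^s\rho_n^{\,t-s}\to0$, so $H_t(E_M)=0$ for every $t>s$ and therefore $\dim_H(E_M)=\inf\{t:H_t(E_M)=0\}\le s$. The argument is essentially bookkeeping; the only genuine ingredients are the legitimacy of telescoping~\eqref{3.11} over finite digit words and the geometric growth $q_n\ge(m+1)^{(n-1)/2}$ from Lemma~\ref{lema.3.01}(i), which is exactly what makes the coverings $\mathcal{C}_n$ fine. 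No mass distribution principle is needed here, since for an upper bound it suffices to exhibit good coverings.
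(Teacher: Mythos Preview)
Your proof is correct and follows essentially the same covering argument as the paper: iterate \eqref{3.11} down to $\Delta_s(\mathcal{C}_n)\le\theta^s$, observe that the natural coverings $\mathcal{C}_n$ become $\delta$-fine, and conclude $H_s(E_M)\le\theta^s$. The only cosmetic differences are that the paper uses the polynomial bound $q_n\ge\lfloor n/2\rfloor\theta^2$ from \eqref{1.8} rather than the geometric bound from Lemma~\ref{lema.3.01}(i), and it stops at $H_s(E_M)\le\theta^s<\infty$, which already gives $\dim_H(E_M)\le s$ by definition, so your final paragraph upgrading to $H_t(E_M)=0$ for $t>s$ is not needed.
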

\begin{proof}
Since $I_0 = [0, \theta]$, by (\ref{3.11}) we obtain 
\[
\theta^s = |I_0|^s \geq \sum_{k_1=m}^{M} |I_1(k_1)|^s 
\geq \sum_{k_1,k_2=m}^{M} |I_2(k_1, k_2)|^s \geq \ldots 
\geq \sum_{k_1,\ldots,k_n=m}^{M} |I_n(k_1, \ldots, k_n)|^s. 
\]
Now let $\delta > 0$ be given. We choose $ n \geq 1$ so large such that the lengths of all $n$-th order intervals are shorter than $\delta$. 
This is possible since $q_n \geq \left\lfloor \displaystyle \frac{n}{2}\right\rfloor \theta^2$ and 
\[
|I_n| = \displaystyle \frac{\theta}{q_n(q_n + \theta q_{n-1})} < \frac{\theta}{q^2_n} \leq \frac{1}{\left\lfloor \displaystyle \frac{n}{2}\right\rfloor^2 \theta^3}.
\]

Since all the $n$-th order intervals $I_n$ form a $\delta$-covering for $E_M$ it follows that for the $s$-covered length of $E_M$ we have 
$H_{\delta, s} (E_M) \leq \sum |I_n|^s \leq |I_0|^s = \theta^s$. 
Consequently 
\[
H_s(E_M) = \lim_{\delta \to 0} H_{\delta, s} (E_M) \leq \theta^s,
\] 
what involves $\dim_H (E_M) \leq s$.

\end{proof}

\begin{proposition} \label{prop.3.4}
For $s = 1 - \displaystyle \frac{m}{M+2} \frac{1}{\log  \displaystyle\frac{2M(M+1)}{m}}$, where $M \geq m$, the Hausdorff dimension $\dim_H(E_M) \leq s$. 
\end{proposition}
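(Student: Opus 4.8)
The plan is to verify the hypothesis of Lemma \ref{lema.3.3}: for the given $s$ and every $n\ge 1$ and every admissible string $(a_1,\dots,a_{n-1})$ we must establish the inequality \eqref{3.11}. Using \eqref{3.2} and \eqref{3.3}, this is equivalent to
\[
\frac{1}{q_{n-1}^s\,(q_{n-1}+\theta q_{n-2})^s}
\;\ge\;
\sum_{k=m}^{M}\frac{1}{(k\theta q_{n-1}+q_{n-2})^s\,((k+1)\theta q_{n-1}+q_{n-2})^s}.
\]
Here the inequality goes the opposite way to Proposition \ref{prop.3.2}, so the estimates must be reorganized accordingly: on the right side each factor $(k\theta q_{n-1}+q_{n-2})^{1-s}$ and $((k+1)\theta q_{n-1}+q_{n-2})^{1-s}$ is now bounded \emph{above} (not below) by using $k\le M$, i.e.\ replacing $k$ and $k+1$ by $M$ and $M+1$ in the numerators after writing each summand as $(k\theta q_{n-1}+q_{n-2})^{1-s}((k+1)\theta q_{n-1}+q_{n-2})^{1-s}$ over the corresponding product. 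Then the remaining sum $\sum_{k=m}^{M}\big[(k\theta q_{n-1}+q_{n-2})((k+1)\theta q_{n-1}+q_{n-2})\big]^{-1}$ telescopes exactly as in \eqref{3.9}, giving
\[
\frac{1}{\theta}\left(\frac{1}{q_{n-1}(m\theta q_{n-1}+q_{n-2})}-\frac{1}{q_{n-1}((M+1)\theta q_{n-1}+q_{n-2})}\right)
\le \frac{1}{\theta}\,\frac{1}{q_{n-1}(m\theta q_{n-1}+q_{n-2})}.
\]

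After carrying out this substitution and the telescoping, collecting the powers of $\theta q_{n-1}$ and rewriting $(m\theta q_{n-1}+q_{n-2})$ in terms of $(q_{n-1}+\theta q_{n-2})$ via the relation $q_{n-1}+\theta q_{n-2}=\theta(m\theta q_{n-1}+q_{n-2})$ used in Proposition \ref{prop.3.2} (valid since $\theta^2=1/m$), the required inequality \eqref{3.11} reduces to a clean pointwise bound of the shape
\[
\big((M+1)\theta q_{n-1}+q_{n-2}\big)^{1-s}\;\le\;\text{(something)}^{\,1},
\]
i.e.\ to an inequality of the form $(1-s)\log\big((M+1)\theta q_{n-1}+q_{n-2}\big)^{\,\text{(coefficient)}}\le \log(\cdots)$; after bounding $q_{n-2}\le \theta q_{n-1}$ and using $\theta^2 q_{n-1}\le q_{n-1}$, the $q_{n-1}$-dependence cancels and one is left with a numerical inequality in $m,M$ only. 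Choosing $s$ so that $(1-s)\log\frac{2M(M+1)}{m}=\frac{m}{M+2}$ makes this numerical inequality hold, which is exactly the stated value $s=1-\frac{m}{M+2}\cdot\frac{1}{\log\frac{2M(M+1)}{m}}$.

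The main obstacle is arranging the algebra so that, after the upper estimate on the $(1-s)$-power factors and the telescoping, the resulting bound is \emph{tight enough} to close: one must extract a factor comparable to $|I_{n-1}|^s$ on the left and show the leftover factor is $\le 1$. In Proposition \ref{prop.3.2} the leftover factor was $(m+1)^{1-s}(1-\frac{m+1}{M+1})$, needing $(1-s)\log(m+1)\ge -\log(1-\frac{m+1}{M+1})$; here the direction is reversed, so the leftover must be $\le 1$, and the delicate point is that the crude bounds $m\le k\le M$ on the numerator powers lose a factor roughly of size $(M/m)^{1-s}$ per factor — this is precisely why $\frac{2M(M+1)}{m}$, rather than $m+1$, appears inside the logarithm, and why the coefficient $\frac{m}{M+2}$ (rather than $\frac{2(m+1)}{M+1}$) is forced. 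One should double-check that with this choice $s\in(0,1)$ and that all the intermediate steps (in particular $0<s<1$ is needed for the monotonicity arguments bounding the $(1-s)$-powers) remain valid under the hypothesis $M\ge m$; I would also verify the endpoint case $M=m$ separately, where $E_M$ reduces to a single point and the bound is trivial.
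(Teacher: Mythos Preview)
Your overall strategy matches the paper's: verify the hypothesis of Lemma \ref{lema.3.3} by writing each summand as $(k\theta q_{n-1}+q_{n-2})^{1-s}((k+1)\theta q_{n-1}+q_{n-2})^{1-s}$ over the product, bounding the $(1-s)$-power factors above via $k\le M$, and using the telescoping identity \eqref{3.9}. However, there is a genuine gap at a specific step.

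You write the telescoping sum as
\[
\frac{1}{\theta}\left(\frac{1}{q_{n-1}(m\theta q_{n-1}+q_{n-2})}-\frac{1}{q_{n-1}((M+1)\theta q_{n-1}+q_{n-2})}\right)
\;\le\;\frac{1}{\theta}\,\frac{1}{q_{n-1}(m\theta q_{n-1}+q_{n-2})},
\]
discarding the subtracted term. This throws away exactly the piece that makes the argument close. If you drop it, then after the upper bound on the $(1-s)$-powers and the algebra you indicate, the leftover numerical factor is of the form $\bigl(\tfrac{2M(M+1)}{m}\bigr)^{1-s}$ with nothing multiplying it, and the required inequality becomes $\bigl(\tfrac{2M(M+1)}{m}\bigr)^{1-s}\le 1$, which forces $s\ge 1$. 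No choice of $s\in(0,1)$ can satisfy it, so the proof cannot be completed along this line.

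The paper instead \emph{keeps} the subtracted term and bounds it from below: rewriting as in \eqref{3.13} and using $q_{n-2}\le\theta q_{n-1}$ in the denominator gives
\[
\frac{q_{n-1}+\theta q_{n-2}}{(M+1)\theta^2 q_{n-1}+\theta q_{n-2}}\;>\;\frac{q_{n-1}}{(M+2)\theta^2 q_{n-1}}=\frac{m}{M+2},
\]
so the telescoping sum is bounded above by $\dfrac{1}{q_{n-1}(q_{n-1}+\theta q_{n-2})}\Bigl(1-\dfrac{m}{M+2}\Bigr)$. That surviving factor $\bigl(1-\tfrac{m}{M+2}\bigr)$ is precisely what produces the $\tfrac{m}{M+2}$ you later invoke: the final numerical condition is $\bigl(\tfrac{2M(M+1)}{m}\bigr)^{1-s}\bigl(1-\tfrac{m}{M+2}\bigr)\le 1$, and now the elementary inequality $x\le -\log(1-x)$ shows that the stated $s$ works. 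In your write-up the quantity $\tfrac{m}{M+2}$ appears out of nowhere; it has no source once the negative term has been dropped. Restore that term and bound it as above, and the rest of your outline goes through essentially as in the paper.
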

\begin{proof}
We have to show that the assumption from Lemma \ref{lema.3.3} is fulfilled. The inequality to be proved is (\ref{3.11}). 
Using (\ref{3.2}) and (\ref{3.3}) we obtain
\begin{equation} \label{3.12}
\frac{\theta^s}{q^s_{n-1} (q_{n-1} + \theta q_{n-2})^s} \geq 
\sum_{k=m}^{M} \frac{\theta^s}{(k \theta q_{n-1} + q_{n-2})^s  \,  ((k+1) \theta q_{n-1} + q_{n-2})^s}.
\end{equation}
We have already shown in (\ref{3.9}) that 
\begin{eqnarray}  
\sum_{k=m}^{M} \frac{1}{(k \theta q_{n-1} + q_{n-2})  \,  ((k+1) \theta q_{n-1} + q_{n-2})} 
&=& \frac{1}{\theta} 
 \frac{1}{q_{n-1}(m \theta q_{n-1} + q_{n-2})} \left( 1 - \frac{m \theta q_{n-1} + q_{n-2}}{(M+1) \theta q_{n-1} + q_{n-2}} \right) \nonumber \\
&=&  \frac{1}{\theta} 
 \frac{\theta}{q_{n-1}(q_{n-1} + \theta q_{n-2})} \left( 1 - \frac{q_{n-1} + \theta q_{n-2}}{(M+1) \theta^2 q_{n-1} + \theta q_{n-2}} \right) \nonumber \\
&<& \frac{1}{q_{n-1}(q_{n-1} + \theta q_{n-2})} \left( 1 - \frac{ 1 }{ \theta^2(M+2) } \right) \nonumber  \\
&=& \frac{1}{q_{n-1}(q_{n-1} + \theta q_{n-2})} \left( 1 - \frac{m}{M+2} \right). \label{3.13}
\end{eqnarray}
Multiplying (\ref{3.13}) with 
$(k \theta q_{n-1} + q_{n-2})^{1-s}  \,  ((k+1) \theta q_{n-1} + q_{n-2})^{1-s}$
we get 
\[
\sum_{k=m}^{M} \frac{(k \theta q_{n-1} + q_{n-2})^{1-s}  \,  ((k+1) \theta q_{n-1} + q_{n-2})^{1-s}}{(k \theta q_{n-1} + q_{n-2})  \,  ((k+1) \theta q_{n-1} + q_{n-2})}
\leq  
\sum_{k=m}^{M} \frac{(M \theta q_{n-1} + q_{n-2})^{1-s}  \,  ((M+1) \theta q_{n-1} + q_{n-2})^{1-s}}{(k \theta q_{n-1} + q_{n-2})  \,  ((k+1) \theta q_{n-1} + q_{n-2})}.
\]
Since $q_{n-2} \leq \theta q_{n-1}$ and $q_{n-2} < M q_{n-2}$ we get 
\begin{eqnarray*} 
(M \theta q_{n-1} + q_{n-2})^{1-s}  \,  ((M+1) \theta q_{n-1} + q_{n-2})^{1-s} 
&<& (M \theta q_{n-1} + M \theta q_{n-1})^{1-s}  \,  ((M+1) \theta q_{n-1} + (M+1)\theta^2 q_{n-2})^{1-s} \\
&=& (2 M \theta q_{n-1})^{1-s} \, ((M+1) \theta)^{1-s} (q_{n-1}+ \theta q_{n-2})^{1-s}.
\end{eqnarray*}
Now by (\ref{3.13}) we get
\begin{eqnarray*}  
\sum_{k=m}^{M} \frac{1}{(k \theta q_{n-1} + q_{n-2})^s  \, ((k+1) \theta q_{n-1} + q_{n-2})^s} 
&<& 
\frac{ (2 M \theta q_{n-1})^{1-s} \, ((M+1) \theta)^{1-s} (q_{n-1}+ \theta q_{n-2})^{1-s} }{q_{n-1}(q_{n-1}+\theta q_{n-2}) } \left( 1 - \frac{m}{M+2} \right)  \\
&=&
\frac{2^{1-s}(M(M+1))^{1-s}\left(\theta^2 \right)^{1-s}}{q^s_{n-1}(q_{n-1}+\theta q_{n-2})^s} \left( 1 - \frac{m}{M+2} \right).
\end{eqnarray*}
Now to obtain (\ref{3.12}) we only have to show that 
\begin{equation} \label{3.14}
\left( \frac{2M(M+1)}{m} \right)^{1-s} \left( 1 - \frac{m}{M+2} \right) \leq 1
\end{equation}
which is equivalent with 
\[
(1-s) \log \frac{2M(M+1)}{m} \leq - \log \left( 1 - \frac{m}{M+2} \right).
\]
Since $x \leq - \log(1-x)$, for all $x \in (0, 1)$, by choosing 
$(1-s) \log \displaystyle \frac{2M(M+1)}{m} = \displaystyle \frac{m}{M+2}$,
and assuming that $\displaystyle \frac{m}{M+2} \in \left(0, 1 \right)$, we get (\ref{3.14}) for $s = 1 - \displaystyle \frac{m}{M+2} \frac{1}{\log \frac{2M(M+1)}{m}}$, with $M \geq m$.
\end{proof}
From Proposition \ref{prop.3.2} and Proposition \ref{prop.3.4} we obtain the following result which significantly strengthens Jarnik's result (see (\ref{3.6})). 

\begin{proposition} \label{Prop.Jarnik}
For any $M > 2m+1$

\begin{equation} \label{3.15}
1 - \displaystyle \frac{2(m+1)}{M+1} \frac{1}{\log(m+1)} \leq \dim_H (E_M) \leq 1 - \displaystyle \frac{m}{M+2} \frac{1}{\log  \displaystyle\frac{2M(M+1)}{m}}. 
\end{equation}
In particular, the set 
\[
E = \left\lbrace x \in \Omega: \sup_{n \geq 1} a_n(x) < + \infty \right\rbrace 
\]
is of Hausdorff dimension $1$. 
\end{proposition}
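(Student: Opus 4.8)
The plan is to read off the two-sided estimate (\ref{3.15}) directly from the two preceding propositions, and then to extract the statement $\dim_H(E)=1$ by a countable-stability argument. First I would check that the hypotheses of both Proposition \ref{prop.3.2} and Proposition \ref{prop.3.4} are satisfied whenever $M>2m+1$: the former requires exactly $M>2m+1$ (equivalently $\frac{m+1}{M+1}\in(0,\frac12)$), while the latter only asks for $M\geq m$, which is weaker. Hence, for every such $M$, Proposition \ref{prop.3.2} gives $\dim_H(E_M)\geq 1-\frac{2(m+1)}{M+1}\frac{1}{\log(m+1)}$ and Proposition \ref{prop.3.4} gives $\dim_H(E_M)\leq 1-\frac{m}{M+2}\frac{1}{\log(2M(M+1)/m)}$, which together are precisely (\ref{3.15}). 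This step is pure bookkeeping once the two propositions are in hand.

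For the ``in particular'' assertion, I would first record two elementary facts: the family $\{E_M\}_{M\in\mathbb{N}_m}$ is nondecreasing in $M$ (relaxing the upper bound on the digits can only enlarge the set), and $E=\bigcup_{M\in\mathbb{N}_m}E_M$. The inclusion $\supset$ is clear, and for $\subset$ one notes that if $x\in\Omega$ has $\sup_{n\geq 1}a_n(x)=:M_0<\infty$, then $M_0\in\mathbb{N}_m$ (since $a_n(x)\geq m$ for all $n$) and $x\in E_{M_0}$. As this is a countable union, property (ii) of the Hausdorff dimension yields $\dim_H(E)=\sup_{M\in\mathbb{N}_m}\dim_H(E_M)$. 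Applying the lower bound in (\ref{3.15}) for $M>2m+1$ and letting $M\to\infty$, the quantity $1-\frac{2(m+1)}{(M+1)\log(m+1)}$ tends to $1$ (here $\log(m+1)>0$ since $m\geq 1$), so $\sup_{M}\dim_H(E_M)\geq 1$; conversely $E\subset[0,\theta]$ gives $\dim_H(E)\leq\dim_H([0,\theta])=1$ by monotonicity. Combining the two, $\dim_H(E)=1$.

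I do not anticipate a genuine obstacle here: all the analytic work has already been carried out in Propositions \ref{prop.3.2} and \ref{prop.3.4}, whose cores are the estimates (\ref{3.10}) and (\ref{3.14}), and what remains is assembly plus a single limit. The only points demanding a moment's care are the exactness of the representation $E=\bigcup_{M}E_M$ as a \emph{countable} increasing union (so that countable stability of $\dim_H$ applies) and the observation that the lower bound in (\ref{3.15}) is meaningful and converges to $1$ because $\log(m+1)$ is strictly positive for every admissible $m$.
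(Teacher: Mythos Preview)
Your proposal is correct and follows essentially the same approach as the paper: the two-sided estimate (\ref{3.15}) is simply read off from Propositions \ref{prop.3.2} and \ref{prop.3.4}, and the conclusion $\dim_H(E)=1$ is obtained by writing $E=\bigcup_M E_M$ and letting $M\to\infty$ in the lower bound. In fact you supply slightly more detail than the paper, which merely states that the proposition follows from the two preceding ones.
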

\begin{remark}
We focus on the case of RCFs when $\theta = 1$, thus for $m = 1 \,\, (\theta^2 = 1/m)$. 
\begin{itemize}
\item[(i)]  Firstly, we obtained less restrictive conditions for $M$ compared to Jarnik.

\item[(ii)] Secondly, our estimates for the lower and upper bounds are much better than those obtained by Jarnik for all values of $M$ ($M > 8$). The Table {4.1} is very suggestive. 

\item[(iii)] In Table {4.2} we also collect some values of the lower and upper bounds for different values of $m \geq 2$ and $M > 2m+1$. 
\end{itemize}
\end{remark}

\begin{table}[h]
\begin{center}
\begin{tabular}{|l|l|l|l|l|}
  \hline
  $M$ & Lower bound (Jarnik) & Lower bound & Upper bound & Upper bound (Jarnik)  \\
  \hline
    & $ 1 - \frac{4}{M \log 2}$ & $ 1 - \frac{4}{M+1}\frac{1}{\log 2}$ & $ 1 - \frac{1}{M+2}\frac{1}{\log 2M(M+1)}$ & $ 1 - \frac{1}{8M \log M}$  \\ 
  \hline
  $9$ & $0.358802204$ & $0.422921983$ & $0.982493771$ & $0.993678894$    \\ 
    \hline
  $100$ & $0.942292198$ & $0.942863562$ & $0.999011047$ & $0.999728565$    \\ 
        \hline
  $10000$ & $0.999422922$ & $0.999422979$ & $0.999994769$ & $0.999998642$    \\ 
          \hline
\end{tabular}
\end{center}
\captionof*{table}{Table 4.1}\label{Table4.1}
\end{table}

\begin{table}[h]
\begin{center}
\begin{tabular}{|l|l|l|l|}
  \hline
  ${m}$ & ${M}$ & {Lower bound} & {Upper bound} \\
  \hline
   $2$ & $7$ & $0.31732058$ & $0.944794333$ \\ 
  \hline
  $100$ & $203$ & $0.785445239$ & $0.927402458$ \\ 
    \hline
  $10000$ & $30000$ & $0.927613546$ & $0.972455324$ \\ 
  \hline
\end{tabular}
\end{center}
\captionof*{table}{Table 4.2}\label{Table4.2}
\end{table}

\section{The main result}

In this section, we prove our second goal of this paper. 
For this, we need the following lemmas.

\begin{lemma} \label{lema.5.1}
$\dim_H(E(0)) = 1$.
\end{lemma}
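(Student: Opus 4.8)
The plan is to deduce everything from the inclusion of the badly approximable set into $E(0)$ together with the lower bound already established in Proposition \ref{Prop.Jarnik}. First I would observe that for every $M \in \mathbb{N}_m$ one has $E_M \subseteq E(0)$. Indeed, if $x \in E_M$ then $L_n(x) = \max_{1\le i \le n} a_i(x) \le M$ for all $n$, hence
\[
0 \le \frac{L_n(x)\log\log n}{n} \le \frac{M\log\log n}{n} \xrightarrow[n\to\infty]{} 0,
\]
so the limit defining $E(\eta)$ exists and equals $0$, i.e. $x \in E(0)$. Therefore
\[
E = \bigcup_{M \ge m} E_M \subseteq E(0),
\]
where $E = \{x \in \Omega : \sup_{n\ge 1} a_n(x) < +\infty\}$ is the set considered in Proposition \ref{Prop.Jarnik}.

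Next I would invoke the monotonicity of Hausdorff dimension (property (i) of Section 2), giving $\dim_H(E(0)) \ge \dim_H(E_M)$ for every $M$, and then the lower bound of Proposition \ref{Prop.Jarnik},
\[
\dim_H(E_M) \ge 1 - \frac{2(m+1)}{M+1}\,\frac{1}{\log(m+1)}, \qquad M > 2m+1.
\]
Letting $M \to \infty$, the right-hand side tends to $1$, and hence $\dim_H(E(0)) \ge 1$. (Equivalently, one may simply use $\dim_H(E) = 1$ from Proposition \ref{Prop.Jarnik} together with $E \subseteq E(0)$.)

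Finally, since $E(0) \subseteq \Omega \subseteq [0,\theta] \subset \mathbb{R}$, and every subset of a bounded real interval has Hausdorff dimension at most $1$, we get $\dim_H(E(0)) \le 1$. Combining the two inequalities yields $\dim_H(E(0)) = 1$, as claimed.

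There is essentially no real obstacle in this argument once Proposition \ref{Prop.Jarnik} is available; the single point requiring (minimal) attention is the verification of the inclusion $E_M \subseteq E(0)$, namely the observation that the crude deterministic bound $L_n(x) \le M$ is already far more than enough to absorb the slowly varying normalization $\tfrac{\log\log n}{n}$, so that the limit is forced to be $0$.
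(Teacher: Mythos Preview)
Your proof is correct and follows essentially the same approach as the paper: the paper also observes that $E \subset E(0)$ and then invokes $\dim_H(E)=1$ from Proposition~\ref{Prop.Jarnik} (together with the trivial upper bound). Your version is simply more explicit in verifying the inclusion $E_M \subseteq E(0)$ and in mentioning the upper bound $\dim_H(E(0))\le 1$, but the argument is the same.
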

\begin{proof}
It is clear that $E \subset E(0)$, where $E$ is as in Proposition \ref{Prop.Jarnik}. 
Since $\dim_H(E) = 1$, it follows that $\dim_H(E(0)) = 1$. 
\end{proof}

In the sequel let us study the case: $\eta > 0$. 
Define a sequence $\{n_k\}_{k \geq 1} \subset \mathbb{N}_+$ satisfying $n_k = (k+1)^2$ for any $k \geq 1$. 
For any $M \in \mathbb{N}_m$ and $\eta > 0$ let 
\[
E_M(\eta) =\left\lbrace  x \in \Omega: a_{k^2} (x) = \left\lfloor \frac{\eta k^2}{\log \log k^2} \right\rfloor \mbox{ for all } k \geq 2 \mbox{ and } m \leq a_i(x) \leq M \mbox{ for } i \neq n_k \mbox{ for any } k \geq 1\right\rbrace.
\]
\begin{lemma} \label{lema.5.2}
For any $M \in \mathbb{N}_m$ and $\eta > 0$, $E_M(\eta) \subset E(\eta)$. 
\end{lemma}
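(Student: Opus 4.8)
The plan is to show that every $x\in E_M(\eta)$ satisfies $\lim_{n\to\infty}\frac{L_n(x)\log\log n}{n}=\eta$, which is exactly the membership condition for $E(\eta)$. Fix $x\in E_M(\eta)$. The key observation is a clean dichotomy for the index $i$ realizing the maximum $L_n(x)=\max_{1\le i\le n}a_i(x)$: the digits at the ``special'' positions $n_k=(k+1)^2$ (equivalently, positions $k^2$ for $k\ge 2$) are forced to equal $\lfloor \eta k^2/\log\log k^2\rfloor$, which grows essentially linearly, while all other digits are bounded by the constant $M$. Hence for $n$ large enough, $L_n(x)$ is attained at the largest special position $k^2$ not exceeding $n$; that is, writing $k(n):=\lfloor\sqrt{n}\rfloor$, for all sufficiently large $n$ we have
\[
L_n(x)=\left\lfloor \frac{\eta\, k(n)^2}{\log\log k(n)^2}\right\rfloor .
\]
This holds because $\lfloor \eta k^2/\log\log k^2\rfloor\to\infty$ eventually exceeds $M$, and among the special digits themselves the one with the largest index is the largest (the map $k\mapsto \eta k^2/\log\log k^2$ is eventually increasing).

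The next step is purely asymptotic bookkeeping. From $k(n)=\lfloor\sqrt n\rfloor$ we get $k(n)^2 = n + O(\sqrt n)$, so $k(n)^2/n\to 1$, and also $\log\log k(n)^2 = \log\log n + o(1)$, hence $\frac{\log\log n}{\log\log k(n)^2}\to 1$. Combining these with the floor estimate $\lfloor t\rfloor = t + O(1)$ applied to $t=\eta k(n)^2/\log\log k(n)^2$, we obtain
\[
\frac{L_n(x)\log\log n}{n}
= \frac{\log\log n}{n}\left(\frac{\eta\, k(n)^2}{\log\log k(n)^2}+O(1)\right)
= \eta\cdot\frac{k(n)^2}{n}\cdot\frac{\log\log n}{\log\log k(n)^2}+o(1)\longrightarrow \eta .
\]
Therefore $x\in E(\eta)$, and since $x$ was arbitrary, $E_M(\eta)\subset E(\eta)$.

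The only delicate point — and the step I expect to be the main obstacle to write cleanly — is justifying the first displayed equality, i.e. that for $n$ large the maximum is genuinely taken at the special position $k(n)^2$ and not at some earlier special position. This requires two sub-facts: (a) $\lfloor \eta k^2/\log\log k^2\rfloor > M$ for all $k\ge k_0(\eta,M)$, which is immediate since the quantity tends to infinity; and (b) among $k_0\le j\le k(n)$ the value $\lfloor \eta j^2/\log\log j^2\rfloor$ is maximized at $j=k(n)$, for which it suffices that $j\mapsto j^2/\log\log j^2$ is increasing for $j\ge 3$ (the numerator grows polynomially, the denominator only like $\log\log$). Once these monotonicity observations are in place, the rest is the routine asymptotic computation sketched above, and no appeal to Hausdorff dimension is needed at this stage — that enters only in the subsequent lemmas bounding $\dim_H E_M(\eta)$ from below.
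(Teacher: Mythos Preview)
Your proposal is correct and follows essentially the same approach as the paper: identify that for large $n$ the maximum digit is attained at the largest special position $k(n)^2\le n$ (using that the special digits eventually exceed $M$ and that $k\mapsto k^2/\log\log k^2$ is eventually increasing), then verify the limit. The only cosmetic difference is that the paper writes an explicit sandwich bound using $(k+1)^2\le n<(k+2)^2$, whereas you package the same estimate via $k(n)^2=n+O(\sqrt n)$ and $\log\log k(n)^2=\log\log n+o(1)$.
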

\begin{proof}
Choose $k_0$ large enough such that $\left\lfloor \frac{\eta k^2}{\log \log k^2} \right\rfloor \geq M$ and $\frac{k^2}{\log \log k^2}$ is monotone increasing for $k \geq k_0$. 
Fix $x \in E_M(\eta)$, for any $n \geq n_{k_0}$, there exists a positive integer $k \geq k_0$ such that $(k+1)^2 = n_k \leq n < n_{k+1} = (k+2)^2$. 
Thus 
\[
L_n(x) = \max\{a_1(x), \ldots, a_n(x)\} = \max \left\lbrace a_{(k+1)^2}(x), M \right\rbrace = \left\lfloor \frac{\eta \, (k+1)^2}{\log \log (k+1)^2} \right\rfloor. 
\]
Since 
\[
\frac{L_n(x) \log \log n}{n} \geq 
\left\lfloor \frac{\eta \, (k+1)^2}{\log \log (k+1)^2} \right\rfloor \frac{\log \log \left( (k+2)^2 -1\right) }{(k+2)^2-1}
\]
and 
\[
\frac{L_n(x) \log \log n}{n} \leq 
\left\lfloor \frac{\eta \, (k+1)^2}{\log \log (k+1)^2} \right\rfloor \frac{\log \log \left( (k+1)^2 \right) }{(k+1)^2},
\]
we get 
\[
\lim_{n \to \infty} \frac{L_n(x) \log \log n}{n} = \eta. 
\]
\end{proof}

In order to estimate the Hausdorff dimension of $E_M(\eta)$, we shall introduce for any $ n \geq 1$, the sets $A_n$, described as follows:
\begin{eqnarray*}
A_n = \left\lbrace (\alpha_1, \ldots, \alpha_n) \in \mathbb{N}^n_m, \, \alpha_{(k+1)^2} = \left\lfloor \frac{\eta \, (k+1)^2}{\log \log (k+1)^2} \right\rfloor  \mbox{ for any } k \right.  \\
 \left.  \mbox{ satisfying } (k+1)^2 \leq n \mbox{ and } m \leq \alpha_i \leq M \mbox{ for } 1 \leq i \neq n_k \leq n \right\rbrace.
\end{eqnarray*} 

We call $I_0 = [0, \theta]$ the basic interval of order $0$ and for any $ n\geq 1$ and $(\alpha_1, \ldots, \alpha_n) \in A_n$, we call $I_n(\alpha_1, \ldots, \alpha_n)$ the basic interval of order $n$. 
Define 
\begin{equation} \label{5.1}
J_n(\alpha_1, \ldots, \alpha_n) = \bigcup_{\alpha_{n+1}} I_{n+1}(\alpha_1, \ldots, \alpha_n, \alpha_{n+1})
\end{equation}
a fundamental interval of order $n$, where the union is taken over all $\alpha_{n+1}$ such that $(\alpha_1, \ldots, \alpha_n, \alpha_{n+1}) \in A_{n+1}$. 
Consequently
\begin{equation} \label{5.2}
E_M(\eta) = \bigcap_{n \geq 1} \,  \bigcup_{(\alpha_1, \ldots, \alpha_n) \in A_{n}} I_n (\alpha_1, \ldots, \alpha_n) = 
\bigcap_{n \geq 1} \, \bigcup_{(\alpha_1, \ldots, \alpha_n) \in A_{n}} J_n (\alpha_1, \ldots, \alpha_n).
\end{equation}
For any $n \geq 1$, let $c(n) = \#\left\lbrace k \in \mathbb{N}: (k+1)^2 \leq n \right\rbrace $. 
For any $(\alpha_1, \ldots, \alpha_n) \in A_{n}$, let $\overline{(\alpha_1, \ldots, \alpha_n)}$ to be the block by eliminating the terms 
$\left\lbrace \alpha_{n_{k_i}}: 1 \leq i \leq c(n) \right\rbrace$ in $(\alpha_1, \ldots, \alpha_n)$; 
let 
$\overline{[\alpha_1 \theta, \ldots, \alpha_n \theta]}$ be the finite $\theta$-expansion corresponding to $\overline{(\alpha_1, \ldots, \alpha_n)}$. 
Since the length of the block $\overline{(\alpha_1, \ldots, \alpha_n)}$ is $n - c(n)$, let us consider 
\begin{eqnarray*}
\overline{q}_n (\alpha_1, \ldots, \alpha_n) = q_{n-c(n)}\overline{(\alpha_1, \ldots, \alpha_n)}, \\
\overline{I}_n (\alpha_1, \ldots, \alpha_n) = I_{n-c(n)}\overline{(\alpha_1, \ldots, \alpha_n)}.
\end{eqnarray*}
It is easy to see that 
\begin{equation} \label{5.3}
c(n) < \sqrt{n} \mbox{ and } 
\overline{(\alpha_1, \ldots, \alpha_n)} \in \mathbb{N}^{n-c(n)}_{m,M},
\end{equation}
where 
$\mathbb{N}_{m,M}:=\{m, m+1, \ldots, M\}$. 

\begin{lemma} \label{lema.5.3}
For any $0 < \varepsilon < 1$, there exists $N_0 = N_0(\varepsilon)$ such that for any $n \geq N_0$ and 
$(\alpha_1, \ldots, \alpha_n) \in A_n$, we have 
\[
\left| I_n(\alpha_1, \ldots, \alpha_n) \right| 
\geq 
\frac{1}{\left(1+\theta^2\right)\theta^{1+\varepsilon}}
\left| \overline{I}_n(\alpha_1, \ldots, \alpha_n) \right|^{1+\varepsilon}.  
\] 
\end{lemma}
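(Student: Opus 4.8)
The plan is to reduce the claimed inequality to a comparison of the denominators $q_n := q_n(\alpha_1,\ldots,\alpha_n)$ and $\overline q_n := q_{n-c(n)}(\overline{(\alpha_1,\ldots,\alpha_n)})$ by means of the two-sided estimate (\ref{3.03}). Indeed, (\ref{3.03}) gives
\[
|I_n(\alpha_1,\ldots,\alpha_n)| \ge \frac{\theta}{(1+\theta^2)\,q_n^2} \qquad\text{and}\qquad |\overline I_n(\alpha_1,\ldots,\alpha_n)| \le \frac{\theta}{\overline q_n^{2}},
\]
so the lemma will follow once we prove that, for all $n$ large enough,
\[
q_n^2 \le \theta\,\overline q_n^{2(1+\varepsilon)}, \qquad\text{equivalently}\qquad \left(\frac{q_n}{\overline q_n}\right)^{\!2} \le \theta\,\overline q_n^{2\varepsilon}.
\]
Granting this, substituting it into the two displayed estimates yields $|I_n| \ge \tfrac{1}{(1+\theta^2)\overline q_n^{2(1+\varepsilon)}} \ge \tfrac{1}{(1+\theta^2)\theta^{1+\varepsilon}}\,|\overline I_n|^{1+\varepsilon}$, which is exactly the asserted bound.

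To bound $q_n/\overline q_n$ from above, I would delete the $c(n)$ special digits $\alpha_{n_{k_1}},\ldots,\alpha_{n_{k_{c(n)}}}$ from $(\alpha_1,\ldots,\alpha_n)$ one at a time, applying Lemma \ref{lema.3.01}(ii) at each step; since deleting a digit of value $a$ multiplies the corresponding $q$ by a factor at most $(a+m)\theta$, iteration yields $q_n \le \bigl(\prod_{i=1}^{c(n)}(\alpha_{n_{k_i}}+m)\theta\bigr)\,\overline q_n$. Each special index satisfies $n_{k_i}=(k_i+1)^2\le n$ and $\log\log(k_i+1)^2\ge\log\log 4>0$, so $\alpha_{n_{k_i}}=\bigl\lfloor \eta(k_i+1)^2/\log\log(k_i+1)^2\bigr\rfloor\le \eta n/\log\log 4$, whence $\alpha_{n_{k_i}}+m\le Dn$ with $D:=\eta/\log\log 4+m$. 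Combined with the bound $c(n)<\sqrt n$ from (\ref{5.3}) and $\theta\le 1$, this gives $q_n/\overline q_n\le (Dn)^{\sqrt n}$ for every $n$ with $Dn\ge 1$.

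For the other side I would apply Lemma \ref{lema.3.01}(i) to the reduced block $\overline{(\alpha_1,\ldots,\alpha_n)}$, which has length $n-c(n)$ and, by (\ref{5.3}), lies in $\mathbb{N}_{m,M}^{n-c(n)}\subset\mathbb{N}_m^{n-c(n)}$; this gives $\overline q_n\ge (m+1)^{(n-c(n)-1)/2}\ge (m+1)^{(n-\sqrt n-1)/2}$, hence $\overline q_n^{2\varepsilon}\ge (m+1)^{\varepsilon(n-\sqrt n-1)}$. Therefore
\[
\frac{(q_n/\overline q_n)^2}{\overline q_n^{2\varepsilon}} \le \frac{(Dn)^{2\sqrt n}}{(m+1)^{\varepsilon(n-\sqrt n-1)}},
\]
where the numerator grows like $\exp(O(\sqrt n\,\log n))$ while the denominator grows exponentially in $n$; hence the right-hand side tends to $0$ as $n\to\infty$. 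Consequently there is $N_0=N_0(\varepsilon)$ (the remaining parameters $\theta,m,\eta,M$ being fixed) such that $(q_n/\overline q_n)^2\le\theta\,\overline q_n^{2\varepsilon}$ for all $n\ge N_0$, which is the inequality required above.

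The crux of the argument is the observation that there are only $c(n)=O(\sqrt n)$ special digits, each of polynomial size in $n$, so they can inflate $q_n$ only by a subexponential factor, which is swamped by the exponential lower bound for $\overline q_n$ coming from Lemma \ref{lema.3.01}(i); this is precisely where the choice $n_k=(k+1)^2$ (making $c(n)=O(\sqrt n)$) is used. The one point that needs a little care is checking that the iterated application of Lemma \ref{lema.3.01}(ii) is legitimate — after removing a special digit the shortened block still consists of digits $\ge m$, so the lemma applies again at the new positions — which is immediate since every digit of an element of $A_n$ lies in $\mathbb{N}_m$.
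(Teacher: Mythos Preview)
Your proof is correct and follows essentially the same approach as the paper's: both reduce via (\ref{3.03}) to comparing $q_n$ with $\overline q_n$, apply Lemma~\ref{lema.3.01}(ii) iteratively to bound $q_n/\overline q_n$ by $\prod_i(\alpha_{n_{k_i}}+m)\theta$, and then use Lemma~\ref{lema.3.01}(i) together with $c(n)<\sqrt{n}$ to show this subexponential factor is absorbed by $\overline q_n^{\,2\varepsilon}$. The only cosmetic difference is that the paper arranges the comparison as an explicit chain of inequalities (its (5.4)--(5.6)) leading directly to the stated constant, whereas you bound each special digit crudely by $Dn$ and argue that the ratio tends to~$0$; both are equally valid.
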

\begin{proof}
For $(\alpha_1, \ldots, \alpha_n) \in A_n$, if $n_k \leq n < n_{k+1}$ we have 
\[
\alpha_{n_k} = \left\lfloor \frac{\eta \, (k+1)^2}{\log \log (k+1)^2} \right\rfloor := \beta_{k+1}. 
\]
Since $\{\beta_k \} _{k \geq 1}$ is an increasing positive integer sequence satisfying $\beta_k \to \infty$ as $k \to \infty$ and $\displaystyle\lim_{k \to \infty} (\beta_{k+1})^{\frac{k+1}{n_k}} = 1$, it follows that 
\[
\lim_{k \to \infty} (\alpha_{n_k}+m)^{\frac{k+1}{n_k}} = 1. 
\]
Thus, there exists an integer $k_0$ such that 
\begin{equation} \label{5.4}
(\alpha_{n_k}+m)^{\frac{k+1}{n_k}} < (m+1)^{\frac{\varepsilon}{4}}
\end{equation}
for any $k \geq k_0$. 
Assume that $n_{k_{c(n)}} \leq n < n_{k_{c(n)}+1}$ for some $k_{c(n)} \geq k_0$ and $k_0$ is also chosen to be sufficiently large such that 
\begin{equation} \label{5.5}
(n_{k_{c(n)}} - k_{c(n)} - 3) \varepsilon > n_{k_{c(n)}} \cdot \frac{\varepsilon}{2}
\end{equation}
which is ensured by $\displaystyle \lim_{k \to \infty} \frac{k+1}{n_k} = 0$.

Then by Lemma \ref{lema.3.01}(i), (\ref{5.3}), (\ref{5.4}) and (\ref{5.5}), it yields that  
\begin{eqnarray} \label{5.6}
\overline{q}^{2\varepsilon}_n &=& q^{2\varepsilon}_{n-c(n)} \geq (m+1)^{(n-c(n)-1)\varepsilon} \geq (m+1)^{(n_{k_{c(n)}} - k_{c(n)} - 3) \varepsilon} \nonumber \\
&\geq& (m+1)^{n_{k_{c(n)}}\cdot \frac{\varepsilon}{2}} > (\alpha_{n_k}+m)^{2(k_{c(n)}+1)} 
> (\alpha_{n_k}+m)^{2c(n)}.
\end{eqnarray}
Take $N_0 = n_{k_0}$. For any $n \geq N_0$, since the sequence $\{ \alpha_{n_k} \}_{k \geq 1}$ is increasing, we have by Lemma \ref{lema.3.01}(ii), (\ref{3.03}) and (\ref{5.6}) 
\begin{eqnarray*}
\left| I_n(\alpha_1, \ldots, \alpha_n) \right| &\geq& \frac{\theta}{\left(1+\theta^2\right)q^2_n(\alpha_1, \ldots, \alpha_n)} \\
&\geq& 
\frac{\theta}{\left(1+\theta^2\right)} \frac{1}{q^2_{n-c(n)}\overline{(\alpha_1, \ldots, \alpha_n)}\left[ \left( \alpha_{n_{k_1}} +m \right) \left( \alpha_{n_{k_2}} +m \right)\cdots \left( \alpha_{n_{k_{c(n)}}} +m \right)  \right]^2\cdot \theta^{2c(n)}} \\
&\geq&
\frac{\theta}{\left(1+\theta^2\right)\theta^{2c(n)}} \frac{1}{q^2_{n-c(n)}\overline{(\alpha_1, \ldots, \alpha_n)} \left( \alpha_{n_{k_{c(n)}}} +m \right)^{2c(n)}} \\
&\geq& 
\frac{1}{\left(1+\theta^2\right)\theta^{2c(n)}} \frac{\theta}{\left( q^2_{n-c(n)}\overline{(\alpha_1, \ldots, \alpha_n)} \right)^{1+ \varepsilon }} \\
&=& 
\frac{1}{\left(1+\theta^2\right)\theta^{2c(n)}} \frac{1}{\theta^{\varepsilon}} \left( \frac{\theta}{ q^2_{n-c(n)}\overline{(\alpha_1, \ldots, \alpha_n)}  } \right)^{1+\varepsilon} 
\geq 
\frac{1}{\left(1+\theta^2\right)\theta^{2c(n)} \theta^{\varepsilon}} \left| \overline{I}_n(\alpha_1, \ldots, \alpha_n) \right|^{1+\varepsilon} \\
&\geq& 
\frac{1}{\left(1+\theta^2\right)\theta \cdot \theta^{\varepsilon}} \left| \overline{I}_n(\alpha_1, \ldots, \alpha_n) \right|^{1+\varepsilon} = 
\frac{1}{\left(1+\theta^2\right)\theta^{1+\varepsilon}} \left| \overline{I}_n(\alpha_1, \ldots, \alpha_n) \right|^{1+\varepsilon}.
\end{eqnarray*}
\end{proof}

Without loss of generality for any two different $x ,y \in E_M(\eta)$, $x < y$, there exists a greatest integer, say $n$, such that $x, y$ are contained in the same basic interval of order $n$. 
Thus there exist $\alpha_1, \ldots, \alpha_n \in \mathbb{N}_m$ and $u_{n+1} \neq v_{n+1}$ such that $(\alpha_1, \ldots, \alpha_n, u_{n+1} ) \in A_{n+1}$, 
$(\alpha_1, \ldots, \alpha_n, v_{n+1} ) \in A_{n+1}$ and $x \in I_{n+1}(\alpha_1, \ldots, \alpha_n, u_{n+1} )$, $y \in I_{n+1}(\alpha_1, \ldots, \alpha_n, v_{n+1} )$
respectively. 
Since 
\begin{eqnarray*}
I_{n+1}(\alpha_1, \ldots, \alpha_n, u_{n+1}) \cap E_M(\eta) &=& J_{n+1}(\alpha_1, \ldots, \alpha_n, u_{n+1}) \cap E_M(\eta), \\
I_{n+1}(\alpha_1, \ldots, \alpha_n, v_{n+1}) \cap E_M(\eta) &=& J_{n+1}(\alpha_1, \ldots, \alpha_n, v_{n+1}) \cap E_M(\eta),
\end{eqnarray*}
we have 
\[
x \in J_{n+1}(\alpha_1, \ldots, \alpha_n, u_{n+1}), \quad 
x \in J_{n+1}(\alpha_1, \ldots, \alpha_n, v_{n+1}).
\]
Consequently, $y-x$ is greater than or equal to the gap between $J_{n+1}(\alpha_1, \ldots, \alpha_n, u_{n+1})$ and $J_{n+1}(\alpha_1, \ldots, \alpha_n, v_{n+1})$. 
Notice that $n+1 \neq n_k$ for any $k \in \mathbb{N}$, i.e., $m \leq u_{n+1} \neq v_{n+1} \leq M$, otherwise $u_{n+1} = v_{n+1} = \left\lfloor \frac{\eta \, (k+1)^2}{\log \log (k+1)^2} \right\rfloor$ for some $k \geq 1$, which contradicts $u_{n+1} \neq v_{n+1}$. 
\begin{lemma} \label{lema.5.4}
$y - x \geq \mathcal{K}(\theta, M) \cdot \left| I_n(\alpha_1, \ldots, \alpha_n) \right|$, 
where 
\[
\mathcal{K}(\theta, M) := \frac{ m }
{ \theta \left( M(M+1) +(M+1)\theta +m \right) \left( M + \theta +1 \right) }. 
\] 
\end{lemma}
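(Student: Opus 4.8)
The plan is to bound $y-x$ from below by estimating the gap between the two fundamental intervals $J_{n+1}(\alpha_1,\dots,\alpha_n,u_{n+1})$ and $J_{n+1}(\alpha_1,\dots,\alpha_n,v_{n+1})$, and then to compare this gap with $|I_n(\alpha_1,\dots,\alpha_n)|$ using the formula \eqref{3.2}. First I would recall from \eqref{5.1} that $J_{n+1}(\alpha_1,\dots,\alpha_n,\alpha_{n+1})$ is a union of intervals $I_{n+2}(\alpha_1,\dots,\alpha_n,\alpha_{n+1},\alpha_{n+2})$; since $n+2$ may or may not be of the form $n_k$, the index $\alpha_{n+2}$ either runs over all of $\mathbb{N}_{m,M}$ or is forced to equal a single value $\beta_{k+1}$. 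In either case $J_{n+1}$ is a subinterval of $I_{n+1}$, and I would compute its endpoints explicitly: using \eqref{3.1} and the recursions \eqref{1.5}–\eqref{1.6} with one more digit appended, $J_{n+1}(\alpha_1,\dots,\alpha_n,j)$ has endpoints of the form $\frac{p'_{n+1}+t\,\theta p_n'}{q'_{n+1}+t\,\theta q_n'}$ for appropriate $t$ in a bounded range, where $q'_{n+1}=j\theta q_n+q_{n-1}$ and $q_n'=q_n$.

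The key step is then the gap estimate. Assuming without loss of generality $u_{n+1}<v_{n+1}$ (and using the alternating orientation from \eqref{3.1}), the two intervals $I_{n+1}(\alpha_1,\dots,\alpha_n,u_{n+1})$ and $I_{n+1}(\alpha_1,\dots,\alpha_n,v_{n+1})$ are disjoint subintervals of $I_n(\alpha_1,\dots,\alpha_n)$, and $J_{n+1}(\dots,u_{n+1})$, $J_{n+1}(\dots,v_{n+1})$ sit inside them. The gap between the two $J$'s is at least the gap between the two $I_{n+1}$'s, which is the interval $I_{n+1}(\dots,u_{n+1}+1)\cup\cdots\cup I_{n+1}(\dots,v_{n+1}-1)$ minus nothing — more simply, the distance between the far endpoint of $I_{n+1}(\dots,u_{n+1})$ closest to $I_{n+1}(\dots,v_{n+1})$ and the near endpoint of $I_{n+1}(\dots,v_{n+1})$. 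Using the explicit endpoints $\frac{(k+1)\theta p_n+p_{n-1}}{(k+1)\theta q_n+q_{n-1}}$ from the displayed formulas just before \eqref{3.3}, this distance telescopes via \eqref{1.7} to
\[
\left|\frac{(u_{n+1}+1)\theta p_n+p_{n-1}}{(u_{n+1}+1)\theta q_n+q_{n-1}}-\frac{v_{n+1}\theta p_n+p_{n-1}}{v_{n+1}\theta q_n+q_{n-1}}\right|
=\frac{(v_{n+1}-u_{n+1}-1)\theta q_n}{\big((u_{n+1}+1)\theta q_n+q_{n-1}\big)\big(v_{n+1}\theta q_n+q_{n-1}\big)},
\]
and since $v_{n+1}-u_{n+1}\geq 1$ gives a worst case only when the two are adjacent, I must instead account for the portions of $J_{n+1}(\dots,u_{n+1})$ and $J_{n+1}(\dots,v_{n+1})$ that protrude toward each other. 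The cleanest route: the gap is at least $|I_{n+1}(\dots,u_{n+1})|$ minus the length of the sub-block of $J_{n+1}(\dots,u_{n+1})$ adjacent to $I_{n+1}(\dots,v_{n+1})$ — but in fact $J_{n+1}(\dots,u_{n+1})\subset I_{n+1}(\dots,u_{n+1})$, so the gap is simply at least the distance from the endpoint of $I_{n+1}(\dots,u_{n+1})$ on the $v$-side to the endpoint of $I_{n+1}(\dots,v_{n+1})$ on the $u$-side, provided the $J$'s do not reach those endpoints. Since $\alpha_{n+2}\le M$ (or is a fixed large value, which only helps), $J_{n+1}(\dots,u_{n+1})$ stays away from the $u_{n+1}$-side endpoint nearest $v_{n+1}$ by a definite proportion; combining the two worst cases and the crude bounds $q_{n-1}\le\theta q_n$, $m\le u_{n+1},v_{n+1}\le M$ in the denominators yields a lower bound of the form $\mathrm{const}(\theta,M)\cdot\frac{\theta}{q_n(q_n+\theta q_{n-1})}=\mathrm{const}(\theta,M)\cdot|I_n(\alpha_1,\dots,\alpha_n)|$ by \eqref{3.2}.

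Finally I would track the constant carefully. Starting from $|I_{n+1}(\dots,j)|=\frac{\theta}{(j\theta q_n+q_{n-1})((j+1)\theta q_n+q_{n-1})}$ by \eqref{3.3}, and bounding $j\theta q_n+q_{n-1}\le M\theta q_n+\theta q_n=(M+1)\theta q_n$ and $(j+1)\theta q_n+q_{n-1}\le(M+2)\theta q_n$ from above, while $|I_n|=\frac{\theta}{q_n(q_n+\theta q_{n-1})}$ with $q_n+\theta q_{n-1}\le(1+\theta^2)q_n$; and by similar elementary manipulations of the gap expression, the ratio $\frac{y-x}{|I_n(\alpha_1,\dots,\alpha_n)|}$ is bounded below by exactly $\frac{m}{\theta(M(M+1)+(M+1)\theta+m)(M+\theta+1)}$, which is $\mathcal{K}(\theta,M)$. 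The main obstacle is the bookkeeping in the gap estimate: one must correctly identify which endpoints of the $J$-intervals face each other, handle the orientation switch between even and odd $n$ from \eqref{3.1}, and verify that the $J$-intervals (whose last free digit is capped at $M$) do not eat into the gap by more than a controlled fraction — this is where the factors $M(M+1)+(M+1)\theta+m$ and $M+\theta+1$ in the denominator of $\mathcal{K}(\theta,M)$ originate.
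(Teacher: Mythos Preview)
Your plan matches the paper's strategy --- split into the cases $n+2=n_k$ and $n+2\neq n_k$, locate the facing endpoints of $J_{n+1}(\dots,u_{n+1})$ and $J_{n+1}(\dots,v_{n+1})$, and compare the gap to $|I_n|$ via \eqref{3.2}--\eqref{3.3}. But the middle of your plan takes a detour that cannot work: the gap between the \emph{basic} intervals $I_{n+1}(\dots,u_{n+1})$ and $I_{n+1}(\dots,v_{n+1})$ is zero when $|u_{n+1}-v_{n+1}|=1$ (they share an endpoint, as your own telescoping formula shows when $v_{n+1}-u_{n+1}-1=0$), so nothing is gained by first computing that gap and then ``correcting''. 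The entire lower bound comes from the fact that $J_{n+1}(\dots,j)$ omits the sub-intervals $I_{n+2}(\dots,j,\ell)$ with $\ell>M$ (respectively, all $\ell$ except the forced digit in Case~I), and those omitted pieces sit precisely on the side of $I_{n+1}(\dots,j)$ that abuts its neighbour.

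The paper does not take your detour. For $n$ even and in Case~II it writes down directly the right endpoint of $J_{n+1}(\dots,u_{n+1})$ as $[\alpha_1\theta,\dots,\alpha_n\theta,u_{n+1}\theta,(M+1)\theta]$ and the left endpoint of $J_{n+1}(\dots,v_{n+1})$ as $[\alpha_1\theta,\dots,\alpha_n\theta,v_{n+1}\theta,m\theta]$, subtracts using \eqref{1.7}, bounds the numerator $\bigl|(u_{n+1}-v_{n+1})\theta+\tfrac{1}{(M+1)\theta}-\tfrac{1}{m\theta}\bigr|$ from below by $\theta m/(M+1)$ (using $|u_{n+1}-v_{n+1}|\ge 1$), and bounds the two denominator factors above via $q_{n-1}\le\theta q_n$ and $u_{n+1},v_{n+1}\le M$. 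After simplifying with $m\theta^2=1$ and comparing to $\theta/q_n^2\ge|I_n|$ this yields exactly $\mathcal{K}(\theta,M)$; Case~I is handled the same way with the forced digit $\beta_{k+1}$ in place of $M+1$ and $m$, and gives a larger constant. Your closing line ``by similar elementary manipulations \dots is bounded below by exactly $\mathcal{K}(\theta,M)$'' is not a proof: you have not identified which endpoints of the $J$-intervals actually face each other in either case, and the constant is entirely determined by that identification.
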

\begin{proof}
We assume that $n$ is even. 
We can proceed in the same way when
$n$ is odd.
The proof is divided into two parts.

$\left. \mathrm{I}\right)$ 
$n+2 = n_k$ for some $k \geq 1$. 

By (\ref{3.1}), $y-x$ is greater than the distance between $J_{n+1}(\alpha_1, \ldots, \alpha_n, u_{n+1})$'s right end point 
$\left[ \alpha_1 \theta, \ldots, \alpha_n \theta, u_{n+1}\theta, \left( \left\lfloor \frac{\eta \, (k+1)^2}{\log \log (k+1)^2} \right\rfloor+1 \right) \theta \right] $
and 
$J_{n+1}(\alpha_1, \ldots, \alpha_n, v_{n+1})$'s left end point 
$\left[ \alpha_1 \theta, \ldots, \alpha_n \theta, v_{n+1}\theta, \left\lfloor \frac{\eta \, (k+1)^2}{\log \log (k+1)^2} \right\rfloor  \theta \right]$.
Thus we obtain 
\begin{linenomath*}
\begin{eqnarray} \label{5.7}
y - x 
&\geq& 
\left| 
\frac{ \left( u_{n+1}\theta + \frac{1}{\left( \left\lfloor \frac{\eta \, (k+1)^2}{\log \log (k+1)^2} \right\rfloor+1 \right) \theta}\right)p_n + \theta p_{n-1}}
{\left( u_{n+1}\theta + \frac{1}{\left( \left\lfloor \frac{\eta \, (k+1)^2}{\log \log (k+1)^2} \right\rfloor+1 \right) \theta}\right)q_n + \theta q_{n-1} } 
-
\frac{ \left( v_{n+1}\theta + \frac{1}{ \left\lfloor \frac{\eta \, (k+1)^2}{\log \log (k+1)^2} \right\rfloor  \theta}\right)p_n + \theta p_{n-1}}
{\left( v_{n+1}\theta + \frac{1}{ \left\lfloor \frac{\eta \, (k+1)^2}{\log \log (k+1)^2} \right\rfloor \theta}\right)q_n + \theta q_{n-1} }
\right| \nonumber \\
&\geq& 
\frac{ \theta \left|  (u_{n+1} - v_{n+1})\theta  + \frac{1}{ \left( \left\lfloor \frac{\eta \, (k+1)^2}{\log \log (k+1)^2} \right\rfloor +1\right)\theta } - \frac{1}{ \left\lfloor \frac{\eta \, (k+1)^2}{\log \log (k+1)^2} \right\rfloor \theta}  \right| }{ \left(\left( u_{n+1}\theta + \frac{1}{\left( \left\lfloor \frac{\eta \, (k+1)^2}{\log \log (k+1)^2} \right\rfloor+1 \right) \theta}\right)q_n + \theta^2 q_{n} \right) \left(\left( v_{n+1}\theta + \frac{1}{ \left\lfloor \frac{\eta \, (k+1)^2}{\log \log (k+1)^2} \right\rfloor \theta}\right)q_n + \theta^2 q_{n} \right) }\nonumber \\
&\geq& 
\frac{ \theta^2 \frac{M}{M+1} }{q^2_n \left(  M\theta + \frac{1}{\left( M+1 \right) \theta}  + \theta^2  \right) \left(  M\theta + \frac{1}{ M \theta} + \theta^2 \right) } \nonumber \\
&\geq& \frac{\theta}{q^2_n} \cdot \frac{M^2 \theta^3}{\left( M(M+1)\theta^2 +(M+1) \theta^3 +1\right)\left(M^2 \theta^2 +M \theta^3 +1 \right) } \nonumber \\
&=&
\frac{\theta}{q^2_n} \cdot \frac{M^2}
{\theta \left( M(M+1)+(M+1) \theta + \frac{1}{\theta^2} \right)\left(M^2  +M \theta +  \frac{1}{\theta^2} \right) } \nonumber \\
&\geq& \frac{M^2}
{\theta \left( M(M+1)+(M+1) \theta + m \right)\left(M^2  +M \theta +  m \right) }
  \left| I_n(\alpha_1, \ldots, \alpha_n) \right|.
\end{eqnarray}
\end{linenomath*}

$\left. \mathrm{II}\right)$ 
$n+2 \neq n_k$ for any $k \geq 1$.

Similarly to part I, we have
\begin{linenomath*}
\begin{eqnarray} \label{5.8}
y-x
&\geq& 
\left| 
\frac{ \left( u_{n+1}\theta + \frac{1}{\left( M+1 \right) \theta}\right)p_n + \theta p_{n-1}}
{\left( u_{n+1}\theta + \frac{1}{\left( M+1 \right) \theta}\right)q_n + \theta q_{n-1} } 
-
\frac{ \left( v_{n+1}\theta + \frac{1}{ m  \theta}\right)p_n + \theta p_{n-1}}
{\left( v_{n+1}\theta + \frac{1}{ m \theta}\right)q_n + \theta q_{n-1} }
\right| \nonumber \\
&\geq&
\frac{ \theta \left| (u_{n+1} -v_{n+1}) \theta + \frac{1}{(M+1)\theta} -\frac{1}{m \theta} \right|  }
{ \left( \left( u_{n+1}\theta + \frac{1}{\left( M+1 \right) \theta}\right)q_n + \theta q_{n-1} \right) \left( \left( v_{n+1}\theta + \frac{1}{ m \theta}\right)q_n + \theta q_{n-1} \right)} \nonumber \\
&\geq&
\frac{ \theta \frac{\theta m}{M+1}  }
{ \left( \left( M\theta + \frac{1}{\left( M+1 \right) \theta}\right)q_n + \theta^2 q_{n} \right) \left( \left( M\theta + \frac{1}{ m \theta}\right)q_n + \theta^2 q_{n} \right)} \nonumber \\
&=&
\frac{ 1 }
{q^2_n (M+1)\left(  M\theta + \frac{1}{\left( M+1 \right) \theta}  + \theta^2 \right) \left(   M\theta + \frac{1}{ m \theta} + \theta^2  \right)} \nonumber \\
&=&
\frac{\theta}{q^2_n} \cdot
\frac{ m \theta}
{ \left( M(M+1)\theta^2 +(M+1)\theta^3 +1 \right) \left( Mm\theta^2 +m\theta^3 +1 \right) } \nonumber \\
&=&
\frac{\theta}{q^2_n} \cdot
\frac{ m \theta}
{ \theta^2 \left( M(M+1) +(M+1)\theta +m \right) \left( M + \theta +1 \right) } \nonumber \\
&\geq&  
\frac{ m }
{ \theta \left( M(M+1) +(M+1)\theta +m \right) \left( M + \theta +1 \right) }
\left| I_n(\alpha_1, \ldots, \alpha_n) \right|.
\end{eqnarray}
\end{linenomath*}
Thus, from (\ref{5.7}) and (\ref{5.8}), the proof is complete. 
\end{proof}

For a fixed $\eta > 0$ and $M > 2m+1$, consider the map $f: E_M(\eta) \to E_M$ defined by 
\[
f(x) = \lim_{n \to \infty} \overline{[\alpha_1 \theta, \ldots, \alpha_n \theta]}
\]
for any $x = [\alpha_1 \theta, \alpha_2 \theta, \ldots, \alpha_n \theta, \ldots] \in E_M(\eta)$.

For any $ 0 < \varepsilon < 1$, by Lemma \ref{lema.5.3} and based on the estimation given by Lemma \ref{lema.5.4}, let us define another constant 
\[ 
\mathcal{K}_1(\theta, M) = \frac{\mathcal{K}(\theta, M)}{1+\theta^2} \cdot \min_{(\alpha_1, \ldots, \alpha_{N_0}) \in A_{N_0}}  \left\lbrace  \left| I_{N_0}(\alpha_1, \ldots, \alpha_{N_0})  \right| \right\rbrace,
\]
where $N_0$ is as in Lemma \ref{lema.5.3}. 
Now, by Lemma \ref{lema.5.3} and Lemma \ref{lema.5.4}, for any two different numbers $x, y \in E_M(\eta)$ we have that 
\begin{eqnarray*}
\left| f(x) - f(y) \right| 
&\leq&
\left| \overline{I}_n(\alpha_1, \ldots, \alpha_n) \right| 
\leq
\theta \left( 1+\theta^2 \right)^{\frac{1}{1+\varepsilon}} 
\left| I_n(\alpha_1, \ldots, \alpha_n) \right|^{\frac{1}{1+\varepsilon}} \\
&\leq& 
\theta \left( 1+\theta^2 \right)^{\frac{1}{1+\varepsilon}}
\cdot
\frac{| x- y |^{\frac{1}{1+\varepsilon}}}{ \left(\mathcal{K}(\theta, M) \right)^{\frac{1}{1+\varepsilon}} } 
\leq 
\left( \frac{ 1+\theta^2 }{ \mathcal{K}(\theta, M) } \right)^{\frac{1}{1+\varepsilon}} | x- y |^{\frac{1}{1+\varepsilon}} \\
&\leq& 
\frac{ 1+\theta^2 }{ \mathcal{K}(\theta, M) } | x- y |^{\frac{1}{1+\varepsilon}},  
\end{eqnarray*}
if $|x-y|< \mathcal{K}_1(\theta, M)$.
It means that the function $f$ is $\frac{1}{1+\varepsilon}$-H\"{o}lder on all intervals whose lengths are bounded by $\mathcal{K}_1(\theta, M)$. 
By Lemma \ref{lema.2.1} for $E_M(\eta) \cap I$, where $I$ is an arbitrary interval such that $|I| \leq \mathcal{K}_1(\theta, M)$, we have 
\[
\dim_H \left( f(E_M(\eta)) \cap I \right) \leq 
(1+\varepsilon) \dim_H \left( E_M(\eta) \cap I \right). 
\]
It follows immediately that 
\begin{equation} \label{5.9}
 \dim_H \left( f(E_M(\eta)) \right) \leq \dim_H \left( E_M(\eta)\right) 
\end{equation} 
since $\varepsilon$ and $I$ are arbitrary. 

Next we will show that $f\left(E_M(\eta) \right) = E_M$. 
Clearly, $f\left(E_M(\eta) \right) \subset E_M$. 
For each $y = [\alpha_1 \theta, \alpha_2 \theta, \ldots] \in E_M$, we can construct the inverse image of $y$ by inserting in $[\alpha_1 \theta, \alpha_2 \theta, \ldots]$ a sequence of big partial quotients $\{\beta_k\}_{k\geq 2}$. 
Actually, we put 
\[
f^{-1}(y) = 
[\alpha_1 \theta, \alpha_2 \theta, \alpha_3 \theta, \beta_2 \theta, \alpha_4 \theta, \ldots, \alpha_8 \theta, \beta_3 \theta, \alpha_9 \theta, \ldots, \alpha_{k^2-1} \theta, \beta_k \theta, \alpha_{k^2} \theta, \ldots],
\]
where 
$\beta_{k}= \left\lfloor \frac{\eta \, k^2}{\log \log k^2} \right\rfloor$. 
Then $f^{-1}(y) \in E_M(\eta)$. 
By (\ref{5.9}) and Proposition \ref{Prop.Jarnik}, we have
\[
\dim_H (E_M(\eta)) \geq \dim_H(E_M)  \geq   1 - \displaystyle \frac{2(m+1)}{M+1} \frac{1}{\log(m+1)}, 
\]
for any $M > 2m+1$. 
From Lemma \ref{lema.5.2}, we have 
\[
\dim_H(E(\eta)) \geq  1 - \displaystyle \frac{2(m+1)}{M+1} \frac{1}{\log(m+1)}. 
\]
Since $M > 2m+1$ is arbitrary, we have 
$\dim_H (E(\eta)) = 1$. 

Therefore, we may state the main result of this section. 

\begin{theorem} \label{th.5.5}
Let $\eta \geq 0$ and $E(\eta)$ be as in (\ref{1.9}). Then
\[
\dim_H (E(\eta)) = 1.
\]
\end{theorem}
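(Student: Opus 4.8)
The plan is to split the proof of Theorem~\ref{th.5.5} into the two cases $\eta = 0$ and $\eta > 0$, which have already been essentially prepared by the preceding lemmas and the discussion following Lemma~\ref{lema.5.4}. For $\eta = 0$, the statement is exactly Lemma~\ref{lema.5.1}, which follows immediately from $E \subset E(0)$ together with $\dim_H(E) = 1$ from Proposition~\ref{Prop.Jarnik}. So the only work is to assemble the chain of inclusions and dimension bounds established for $\eta > 0$ into a clean argument.

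For $\eta > 0$, the idea is to transfer the full Hausdorff dimension of $E_M$ (from Proposition~\ref{Prop.Jarnik}) to $E_M(\eta)$ via the coding map $f \colon E_M(\eta) \to E_M$ that deletes the ``big'' partial quotients sitting at positions $n_k = (k+1)^2$. First I would record that $f$ is surjective onto $E_M$: given $y = [\alpha_1\theta, \alpha_2\theta,\ldots] \in E_M$, inserting the prescribed digits $\beta_k = \lfloor \eta k^2 / \log\log k^2\rfloor$ at the positions $k^2$ produces a preimage lying in $E_M(\eta)$, so $f(E_M(\eta)) = E_M$. Next I would invoke Lemma~\ref{lema.5.3} (comparing $|I_n|$ with $|\overline{I}_n|^{1+\varepsilon}$) and Lemma~\ref{lema.5.4} (the gap estimate $y - x \geq \mathcal{K}(\theta,M)\,|I_n(\alpha_1,\ldots,\alpha_n)|$) to conclude, as in the computation preceding the theorem statement, that $f$ is locally $\frac{1}{1+\varepsilon}$-H\"older on intervals of length at most $\mathcal{K}_1(\theta,M)$. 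Lemma~\ref{lema.2.1} then gives $\dim_H(f(E_M(\eta)) \cap I) \le (1+\varepsilon)\dim_H(E_M(\eta) \cap I)$ for each short interval $I$, and letting $\varepsilon \to 0$ and using countable stability of Hausdorff dimension over a cover of $E_M(\eta)$ by such intervals yields $\dim_H(E_M) = \dim_H(f(E_M(\eta))) \le \dim_H(E_M(\eta))$.

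Combining this with the lower bound for $\dim_H(E_M)$ in Proposition~\ref{Prop.Jarnik}, we get
\[
\dim_H(E_M(\eta)) \;\ge\; \dim_H(E_M) \;\ge\; 1 - \frac{2(m+1)}{M+1}\,\frac{1}{\log(m+1)}
\]
for every $M > 2m+1$. Since $E_M(\eta) \subset E(\eta)$ by Lemma~\ref{lema.5.2}, monotonicity of Hausdorff dimension gives the same lower bound for $\dim_H(E(\eta))$, and letting $M \to \infty$ forces $\dim_H(E(\eta)) \ge 1$; the reverse inequality is trivial since $E(\eta) \subset [0,\theta)$. This completes the case $\eta > 0$, and together with Lemma~\ref{lema.5.1} it proves $\dim_H(E(\eta)) = 1$ for all $\eta \ge 0$.

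The main obstacle — already absorbed into Lemmas~\ref{lema.5.3} and~\ref{lema.5.4} — is controlling how much the insertion of the large digits $\beta_k$ distorts lengths: one must show that the inserted blocks are sparse enough (the positions grow quadratically, so $c(n) < \sqrt n$) and the digits grow slowly enough ($(\alpha_{n_k}+m)^{(k+1)/n_k} \to 1$) that the distortion is only by an exponent $1+\varepsilon$ with $\varepsilon$ arbitrarily small, rather than by a fixed multiplicative loss in dimension. Given those two lemmas, the remaining steps are the routine H\"older-map and countable-stability bookkeeping sketched above, so in the write-up I would state the theorem's proof briefly, citing Lemma~\ref{lema.5.1} for $\eta = 0$ and pointing to the displayed computation and inclusions for $\eta > 0$.
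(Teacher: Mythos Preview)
Your proposal is correct and follows essentially the same approach as the paper: the case $\eta=0$ via Lemma~\ref{lema.5.1}, and for $\eta>0$ the surjective coding map $f\colon E_M(\eta)\to E_M$, its local $\tfrac{1}{1+\varepsilon}$-H\"older property from Lemmas~\ref{lema.5.3} and~\ref{lema.5.4}, Lemma~\ref{lema.2.1}, and the lower bound of Proposition~\ref{Prop.Jarnik} combined with Lemma~\ref{lema.5.2} and $M\to\infty$. The paper organizes the argument exactly this way, with the H\"older and surjectivity computations placed in the text preceding the theorem statement.
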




\end{document}